\algnewcommand\algorithmicinput{\textbf{Input:}}
\algnewcommand\algorithmicoutput{\textbf{Output:}}
\algnewcommand\Input{\item[\algorithmicinput]}
\algnewcommand\Output{\item[\algorithmicoutput]}
\tikzstyle{vertex}=[circle, draw, inner sep=0pt, minimum size=6pt]
\tikzset{->-/.style={decoration={
  markings,
  mark=at position .5 with {\arrow{>}}},postaction={decorate}}}
\newcommand{\m}[1]{}
\declaretheorem[parent=section,thmbox=M]{theorem}
\declaretheorem[numberlike=theorem]{lemma}
\newcommand{\mc}{\mathcal}
\newcommand{\ra}{\rightarrow}
\newcommand{\Ra}{\Rightarrow}
\newcommand{\ora}[1]{\overrightarrow{#1}}
\DeclareMathOperator{\dic}{\ora \chi}
\newcommand{\F}{Forb_{ind}}
\newcommand{\overbar}[1]{\mkern 1.7mu\overline{\mkern-1.7mu#1\mkern-1.7mu}\mkern 1.7mu}
\tikzstyle{vertex}=[circle,draw, top color=gray!5, 
\tikzstyle{arc}=[->, > = latex',  thick]
\tikzstyle{edge}=[thick, blue]
\newcommand{\ocg}{orientation of a chordal graph\xspace}
\newcommand{\ocgs}{orientations of chordal graphs\xspace}
\title{Heroes in orientations of chordal graphs}
\author{Pierre Aboulker$^1$, Guillaume Aubian$^{1,3}$, Raphael Steiner$^{2}$\\
\small ($1$) DIENS, \'Ecole normale sup\'erieure, CNRS, PSL University, Paris, France\\
\small ($2$) ETH Z\"{u}rich, Switzerland\\
\small ($3$) Université de Paris, CNRS, IRIF, F-75006, Paris, France}
\begin{document}

\maketitle

\begin{abstract}
We characterize all digraphs $H$ such that orientations of chordal graphs with no induced copy of $H$ have bounded dichromatic number. 
\end{abstract}

\section{Introduction}
Throughout this paper, we only consider simple digraphs $G$, that is, for every two distinct vertices $u$ and $v$, the digraph $G$ contains either an
arc from $u$ to $v$, or an arc from $v$ to $u$, or neither; but not both. 
Given a digraph $G$, we denote by $V(G)$ its set of vertices and $A(G)$ its set of arcs. 
For a vertex $x$ of a digraph $G$, we denote by $x^+(G)$ (resp. $x^-(G)$) the set of its out-neighbours (resp. in-neighbours). 
If there is no ambiguity on the digraph, we will simply use $x^+$ and $x^-$.

A {\it $k$-dicolouring} (or \emph{acylic $k$-colouring}) of a digraph $G$ 
 is a mapping $c:V(G) \rightarrow I$ using a colour set $I$ of size $k$ such that for every colour $i \in I$, its preimage $c^{-1}(i)$ induces an acyclic subdigraph of $G$. 
The {\it dichromatic number} of $G$, denoted by $\dic (G)$ and introduced by Neumann-Lara in~\cite{N82}, is the smallest integer $k$ such that  $G$ admits a $k$-dicolouring.

A \emph{tournament} is an orientation of a complete graph. 
A {\em transitive tournament} is an acyclic tournament and we denote by $TT_k$ the unique acyclic tournament on $k$ vertices. 
Given a transitive tournament $T$ on $n$ vertices $\{v_1, \dots, v_n\}$, we say that $v_1, \dots, v_n$ is the \emph{topological ordering} of $T$ if, for all $1 \le i<j \le n$, we have $v_iv_j \in A(T)$. 
Given two tournaments $H_1$ and $H_2$, we denote by $\Delta(1,H_1,H_2)$ the tournament obtained from pairwise disjoint copies of $H_1$ and $H_2$ plus a vertex $x$, and all arcs from $x$ to the copy of $H_1$, all arcs from the copy of $H_1$ to the copy of $H_2$, and all arcs from the copy of $H_2$ to $x$.  We write $\Delta(1,k,H)$ for $\Delta(1, TT_k, H)$. For tournaments $H_1$ and $H_2$, we denote by $H_1 \Ra H_2$ the digraph obtained from disjoint copies of $H_1$ and $H_2$ by adding all  arcs from the copy of $H_1$ to the copy of $H_2$. 

Given two digraphs $G$ and $H$, we say that $G$ is \emph{$H$-free} if it does not contain an induced copy of $H$. 
Given a hereditary class of digraphs $\mc C$, we say that a digraph $H$ is a \emph{hero in $\mc C$} if every $H$-free digraph of $\mc C$ has bounded dichromatic number. 

In a breakthrough paper, Berger, Choromansky, Chudnovsky, Fox, Loebl, Scott, Seymour and Thomassé gave a recursive characterization of all heroes in tournaments, as follows. 

\begin{theorem}[Berger et al. \cite{hero}]\label{thm:heroes}
A digraph $H$ is a hero in tournaments if and only if :
\begin{itemize}
\item $H=K_1$ (the one-vertex digraph), or
\item $H=(H_1 \Ra H_2)$, where $H_1$ and $H_2$ are heroes in tournaments, or
\item $H=\Delta(1,k,H')$ or $H=\Delta(1,H',k)$, where $k\geq 1$ and $H'$ is a hero in tournaments.
\end{itemize}
\end{theorem}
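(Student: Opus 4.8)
This is the characterisation of Berger et al., so ``proving'' it here means reconstructing their strategy; the plan is to establish both implications together, by induction on $|V(H)|$, with $H=K_1$ as the trivial base (the only $K_1$-free tournament is the empty one, of dichromatic number $0$). Two elementary facts about $\dic$ are used throughout. First, if $V(T)$ is partitioned into $A$ and $B$, then $\dic(T)\le\dic(T[A])+\dic(T[B])$, since every monochromatic directed cycle lies entirely inside $A$ or entirely inside $B$. Second, if every arc between $A$ and $B$ is directed from $A$ to $B$ --- equivalently, $T$ is not strongly connected and $A,B$ form a split --- then in fact $\dic(T)=\max\bigl(\dic(T[A]),\dic(T[B])\bigr)$; applying this along the strongly connected components, I may assume when convenient that the tournament being dicoloured is strongly connected.

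\textbf{Sufficiency: the three constructions yield heroes.} For $H=H_1\Rightarrow H_2$ with $H_1,H_2$ heroes --- say every $H_i$-free tournament has dichromatic number at most $c_i$ --- and an $(H_1\Rightarrow H_2)$-free tournament $T$, the goal is to cover $V(T)$ by a number of parts, bounded in terms of $H_1,H_2$ only, each of which is $H_1$-free or $H_2$-free, so that fact~(i) caps $\dic(T)$ by a bounded multiple of $c_1+c_2$. The mechanism is to peel off, from a carefully chosen induced copy of $H_1$ on a set $X$, the set of vertices dominated by all of $X$ (which is forced to be $H_2$-free, since an induced $H_2$ inside it, together with $X$, would give an induced $H_1\Rightarrow H_2$); the delicate point is to choose these copies of $H_1$, exploiting strong connectivity through fact~(ii), so that the dominated sets are large enough for the peeling to stop after boundedly many rounds. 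For $H=\Delta(1,k,H')$ with $H'$ a hero (the case $\Delta(1,H',k)$ then following by reversing all arcs, which preserves $\dic$ and maps heroes to heroes): this is the hard construction. I would run a nested induction on $k$ on top of the induction on $|V(H')|$, resting on two base facts --- that $\Ct$-free (that is, $\Delta(1,1,1)$-free) tournaments are transitive, and the Ramsey-type statement that $\Delta(1,TT_k,TT_l)$-free tournaments have bounded dichromatic number --- together with an extraction step: inside a strongly connected tournament of very large dichromatic number, a greedy iteration through neighbourhoods (using $\dic(T)\le\dic(T[x^+])+\dic(T[x^-])+1$, so that one of the two neighbourhoods carries a constant fraction of $\dic(T)$) produces a transitive set $\{x_1,\dots,x_k\}$ whose common out-neighbourhood still has large dichromatic number and hence, by the induction on $|V(H')|$, contains an induced $H'$; supplementing this configuration with the return arcs guaranteed by strong connectivity --- itself a delicate step --- yields an induced $\Delta(1,k,H')$, a contradiction, and therefore a bound.

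\textbf{Necessity: these are the only heroes.} I would argue the contrapositive. The first ingredient is purely combinatorial: any tournament $H$ that cannot be generated from $K_1$ by the operations $\cdot\Rightarrow\cdot$ and $\Delta(1,k,\cdot),\Delta(1,\cdot,k)$ must contain an induced copy of one of a short, explicit list of ``minimal non-heroes''. The second ingredient provides, for each minimal non-hero $H_0$ on that list, an infinite family of $H_0$-free tournaments whose dichromatic numbers tend to infinity; I would construct these by iterated substitution --- repeatedly blowing up every vertex of a fixed strongly connected tournament such as $\Ct$ by a previously built member of the family --- which forces the dichromatic number up while preserving $H_0$-freeness precisely because $H_0$ admits no decomposition compatible with such a blow-up. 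Since an $H$-free tournament is a fortiori $H_0$-free, this exhibits $H$-free tournaments of unbounded dichromatic number, so such an $H$ is not a hero, completing the equivalence.

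\textbf{Main obstacle.} The crux is the sufficiency of $\Delta(1,k,H')$: isolating the correct ``extraction'' statement --- roughly, that large dichromatic number cannot stay concentrated and must expose a transitive set that dominates a region of still-large dichromatic number and can then be closed up into a $\Delta$-pattern --- and proving it, through the Ramsey-type base cases and the iterated-domination scheme, is the technical engine of the whole argument; the $\Rightarrow$ step hinges on the same kind of ``the dominated sets cannot all be small'' phenomenon. A secondary obstacle, on the necessity side, is the bookkeeping needed to certify that the substitution families genuinely avoid their target non-hero, together with the case analysis showing that every non-hero contains one of the minimal ones.
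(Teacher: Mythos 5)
This statement is not proved in the paper at all: it is imported verbatim from Berger, Choromanski, Chudnovsky, Fox, Loebl, Scott, Seymour and Thomass\'e \cite{hero}, and the present paper only uses it as a black box (to reduce Theorem~\ref{thm:main} to the four digraphs $TT_k$, $\Delta(1,1,k)$, $\Delta(1,2,2)$ and $\vec C_3 \Ra K_1$). So there is no in-paper proof to compare your attempt against; the only fair benchmark is the original twenty-page argument of \cite{hero}.

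Measured against that, your outline gets the architecture right: sufficiency splits into the non-strong case $H_1 \Ra H_2$ and the strong case $\Delta(1,k,H')$, the latter resting on the Ramsey-type theorem that $\Delta(1,TT_k,TT_\ell)$ is a hero and on a domination/extraction scheme; necessity uses closure of heroes under subtournaments, the condensation for non-strong $H$, and explicit $H_0$-free families of unbounded dichromatic number built by iterated substitution into $\vec C_3$ (your observation that $\dic$ strictly increases under such a blow-up is correct: if each of the three copies is forced to use all $k$ colours, some colour class meets all three and yields a monochromatic directed triangle --- this is exactly the mechanism the present paper reuses in Section~\ref{subsec:122}). However, what you have written is a roadmap, not a proof. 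The two load-bearing steps --- (a) that in an $(H_1\Ra H_2)$-free tournament the peeling by common out-neighbourhoods of copies of $H_1$ terminates in boundedly many rounds, and (b) the "extraction" statement that a strongly connected tournament of huge dichromatic number contains a transitive set whose common out-neighbourhood still has large dichromatic number \emph{and} can be completed to an induced $\Delta(1,k,H')$ by return arcs --- are precisely where all the difficulty of \cite{hero} is concentrated, and you name them as obstacles rather than resolve them. Likewise, on the necessity side, the claim that every strongly connected non-hero contains one of a short explicit list of minimal non-heroes is itself a substantial structural theorem (one must show that every strong hero on at least two vertices has the form $\Delta(1,P,Q)$ with one of $P,Q$ transitive), not a routine case analysis. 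As a description of the known proof your sketch is faithful; as a standalone proof it has genuine gaps at exactly these points.
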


Observe that if a class of digraphs $\mc C$ contains all tournaments, then a hero in $\mathcal C$ must be a hero in tournaments.  
A \emph{chordal graph} is a graph with no induced cycle of length at least $4$. A classical theorem of Dirac~\cite{D61} states that all chordal graphs can be obtained by iteratively gluing some complete graphs along cliques (see Section~\ref{sec:chordal} for a formal statement). 
 This implies for undirected graph colouring that chordal graphs are perfect graphs, and thus their chromatic numbers and colouring properties are determined solely by the (largest) cliques contained in them.
 It is then natural to ask whether also for the dichromatic number of \ocgs important characteristics are determined by the largest dichromatic numbers of their subtournaments. In particular, it is a natural problem to characterise the heroes in \ocgs and to see whether they are the same as for tournaments.
 
 In this paper, we find surprising answers to the above questions. First, there is  very few heroes in \ocgs and as our main contribution, we completely describe these digraphs, as follows. 

\begin{theorem}\label{thm:main}
A digraph $H$ is a hero in \ocgs if and only if $H$ is a transitive tournament or isomorphic to $\Delta(1,1,k)$ for some integer $k \ge 1$.
\end{theorem}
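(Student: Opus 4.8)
The theorem splits into an ``if'' direction --- transitive tournaments and the digraphs $\Delta(1,1,k)$ are heroes --- and an ``only if'' direction --- nothing else is. The ``if'' direction for transitive tournaments is immediate: if $G$ is a $TT_\ell$-free orientation of a chordal graph, it has no clique on $2^{\ell-1}$ vertices (a clique induces a tournament, and every tournament on $2^{\ell-1}$ vertices contains $TT_\ell$), so, chordal graphs being perfect, $\dic(G)\le\chr(G)=\ome(G)<2^{\ell-1}$. The real content of the ``if'' direction is that each $\Delta(1,1,k)$ is a hero.

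Two reductions make this concrete. First, a set of vertices in an orientation of a chordal graph induces an acyclic subdigraph exactly when it spans no directed triangle (a shortest directed cycle in such a digraph has length $3$), so $\dic(G)$ is the least number of colours needed to colour $V(G)$ with no monochromatic directed triangle. Second, $\Delta(1,1,k)$-freeness is local: $G$ is $\Delta(1,1,k)$-free iff for every arc $uv$ the set $v^+\cap u^-$ of vertices completing a directed triangle with $uv$ contains no transitive subtournament on $k$ vertices. Inside a clique this forces a bounded-degree structure: if $S$ is a clique, $w$ a vertex adjacent to all of $S$, and $S=S^+\cup S^-$ the partition into out- and in-neighbours of $w$, then each vertex of $S^+$ dominates fewer than $2^{k-1}$ vertices of $S^-$ and each vertex of $S^-$ is dominated by fewer than $2^{k-1}$ of $S^+$ (these dominated/dominating sets are cliques, hence tournaments, and contain no $TT_k$); so the directed triangles through $w$ using two vertices of $S$ form a bipartite digraph of bounded maximum degree. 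Now process $G$ along a clique tree: every directed triangle lies in a bag, each bag has bounded dichromatic number (as $\Delta(1,1,k)$ is a hero in tournaments by Theorem~\ref{thm:heroes}), and when a new bag $B$ is attached along an already-coloured clique $S=B\cap B'$ the only colours to avoid are those creating a monochromatic directed triangle inside $B$ --- which is governed by the bounded-degree structure above. Turning this heuristic into a genuine constant bound on $\dic(G)$ --- coherently assembling the dicolourings of the bags --- is the crux, and is the content of the structure theorem (Theorem~\ref{thm:struct}) and the in-round theorem (Theorem~\ref{thm:in-round}); I expect this assembly step to be the main obstacle on the positive side. (Heuristically, $\Delta(1,1,k)=\Delta(1,TT_1,TT_k)$ survives because both its wings are transitive, so arbitrarily large transitive structure --- which an orientation of a chordal graph may contain freely --- does not by itself create a copy of it, whereas for a pattern like $\Delta(1,2,2)=\Delta(1,TT_2,TT_2)$ it does.)

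For the ``only if'' direction, let $H$ be a hero in orientations of chordal graphs. Since every tournament is an orientation of a chordal graph, $H$ is a hero in tournaments, so by Theorem~\ref{thm:heroes} it is a tournament built from $K_1$ via $\Ra$ and $\Delta(1,k,\cdot),\Delta(1,\cdot,k)$. Suppose $H$ is neither a transitive tournament nor any $\Delta(1,1,k)$; I claim it then contains one of $\Ct\Ra K_1$, $K_1\Ra\Ct$, $\Delta(1,2,2)$ as an induced subtournament. Indeed: if $H=H_1\Ra H_2$ then some $H_i$ is non-transitive (else $H$ is transitive) and hence contains $\Ct$, giving $\Ct\Ra K_1$ or $K_1\Ra\Ct$; if $H=\Delta(1,TT_k,H')$ --- the case $\Delta(1,H',TT_k)$ being symmetric under reversing all arcs, an operation preserving both the class of orientations of chordal graphs and the three obstructions above --- with $H'$ non-transitive, then $H\supseteq\Delta(1,TT_1,\Ct)\supseteq K_1\Ra\Ct$; and if $H'=TT_m$, then $\Delta(1,TT_k,TT_m)\cong\Delta(1,1,\max(k,m))$ whenever $\min(k,m)=1$, so the hypothesis forces $k,m\ge 2$ and $H\supseteq\Delta(1,TT_2,TT_2)=\Delta(1,2,2)$. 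Hence it suffices to construct, for every integer $r$, orientations of chordal graphs of dichromatic number at least $r$ that are $(\Ct\Ra K_1)$-free, that are $(K_1\Ra\Ct)$-free, and that are $\Delta(1,2,2)$-free (three families, or ideally a single family avoiding all three, since $H$-freeness follows from $H_0$-freeness for any induced subdigraph $H_0$ of $H$). Producing these --- presumably recursive clique-sums of smaller members of the family, arranged so that the pattern of directed triangles stays too sparse and too ``unbranched'' to realise the forbidden subdigraph while the dichromatic number is still driven to infinity --- is the main obstacle on the negative side.
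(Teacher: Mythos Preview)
Your proposal is an outline, not a proof: both halves defer the actual work. On the ``if'' side for $\Delta(1,1,k)$, you set up a clique-tree assembly and then appeal to ``Theorem~\ref{thm:struct}'' and ``Theorem~\ref{thm:in-round}'' --- but no such theorems exist in this paper (those labels occur only in unused environment macros in the preamble and are never defined). The paper's argument is far simpler than the global assembly you anticipate: it shows that in a $\Delta(1,1,k)$-free tournament every vertex has \emph{triangle degree} (the maximum number of directed triangles through it that pairwise share only that vertex) less than $2^{2k-2}$, and then inducts by deleting a simplicial vertex $x$. Colour $G\setminus x$ with $2^{2k-2}$ colours by induction; since $x$ is simplicial its neighbourhood is a tournament, so fewer than $2^{2k-2}$ colours are blocked by a potential monochromatic directed triangle through $x$, and a free colour remains for $x$. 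Your bounded-bipartite-degree observation about $S^+,S^-$ is morally this same lemma, but you then head off toward clique trees and miss the one-vertex-at-a-time induction that makes any assembly step unnecessary.

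On the ``only if'' side, your reduction to the three obstructions $\vec C_3 \Ra K_1$, $K_1 \Ra \vec C_3$, $\Delta(1,2,2)$ is correct and matches the paper (which also uses arc-reversal to drop $K_1 \Ra \vec C_3$). But you then stop, and the constructions --- which are the substance of this direction and occupy the bulk of the paper --- are entirely absent. The $\Delta(1,2,2)$ construction is comparatively short (glue a fresh copy of $G_k$ onto each arc $uv$ of a $TT_{k+1}$ via directed triangles $u\to v\to w\to u$); the $\vec C_3 \Ra K_1$ construction is genuinely delicate and goes through an auxiliary lemma producing, within the class, a digraph of dichromatic number $k$ in which \emph{every} $k$-dicolouring contains a rainbow $TT_k$, then uses two layers of such gadgets plus extra apex vertices to force a monochromatic triangle. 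Without these constructions you have only restated what must be proved.
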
 

Secondly, our constructions in the proof of the above characterisation exhibit \ocgs with arbitrarily large dichromatic number all whose subtournaments are $2$-colourable, showing that in contrast to chromatic number the dichromatic number of an \ocg heavily depends on its global structure and not only on the cliques contained in it.

We denote by $\vec C_3$ the directed cycle on three vertices, also called \emph{directed triangle} (observe that $\vec C_3 = \Delta(1,1,1)$). 
It is easy to see that a hero in tournaments is either a transitive tournament, or isomorphic to  $\Delta(1,1,k)$ for some integer $k \ge 1$, or it contains one of the heros $\Delta(1,2,2)$, $K_1 \Ra \vec C_3$ or $\vec C_3 \Ra K_1$ as a subtournament. Moreover, since  reversing all arcs of a $(\vec C_3 \Ra K_1)$-free \ocg results in a $ (K_1 \Ra \vec C_3)$-free \ocg and does not change the dichromatic number, proving that $\vec C_3 \Ra K_1$ is not a hero in \ocgs implies that $K_1 \Ra \vec C_3$ is not either. 
Hence, to prove Theorem~\ref{thm:main}, it will be enough to prove the following: 
\begin{itemize}
    \item Transitive tournaments and $\Delta(1,1,k)$ for $k \ge 1$ are heroes in \ocgs. This is done in Section~\ref{sec:heroes}.
    \item  $\Delta(1,2,2)$ and $\vec C_3 \Ra K_1$ are not heroes in \ocgs. This is respectively done in subsections~\ref{subsec:122} and~\ref{subsec:1C3}. 
\end{itemize}

\medskip

\textbf{Related results}: 

Given a digraph $H$, denote by $\F(H)$ the class of digraphs with no induced copy of $H$. 
A result of~\cite{HM12} implies that if $H$ is not an orientation of a forest, then no digraph is a hero in $\F(H)$ except for the isolated vertex and the arc. 
A systematic study of heroes in classes of digraphs of the form $\F(H)$ where $H$ is an oriented forest has been initiated in~\cite{ACN21}. 
An \emph{oriented star} is an orientation of a star, that is a tree with only one non-leaf vertex. It is proved~\cite{ACN21} that if $H$ is not the disjoint union of oriented stars, then no hero in $\F(H)$ contains a directed triangle. 
A  result in~\cite{CS19} implies that every transitive tournament is a hero in $\F(H)$ when $H$ is a disjoint union of oriented stars. 
It is proved in~\cite{HLNT19} that heroes in $\F(\overbar{K_k})$ are the same as heroes in tournaments, where $\overbar{K_k}$ is the graph on $k$ vertices with no arc (which is in particular the simplest union of disjoint oriented stars).  
In~\cite{AAC21} and~\cite{S21}, it is proved that $K_1 \Ra \vec C_3$ is a hero in $\F(\vec K_{1,2})$, where $\vec K_{1,2}$ is the star on three vertices with a vertex of out-degree $2$.  
In~\cite{AAC22multi}, heroes in the class of orientations of complete multipartite graphs (which corresponds to the class $\F(K_1 + \vec K_2)$ where $K_1 + \vec K_2$ is the graph made of an isolated vertex and an arc) are almost fully characterized, up to one particular digraph, namely $\Delta(1,2,2)$. 

The \emph{clique number} $\omega(G)$ of a digraph $G$ is the size of a largest clique in the underlying graph of $G$.  We say that a hereditary class of digraphs $\mathcal C$ is \emph{$\dic$-bounded} if there exists a function $f$ such that for every $G \in \mathcal C$, $\dic(G) \leq f(\omega(G))$. It is easy to see that a class of digraphs $\mathcal C$ is $\dic$-bounded if and only if $TT_k$ is a hero in $\mathcal C$ for every integer $k$ (this is because every orientation of a large enough complete graph contains a copy of $TT_k$). 
We denote by $\vec P_k$ the directed path on $k$ vertices.  It is proved in~\cite{CHMS22} that, for every $k \geq 3$, the class of digraphs with no induced  $\vec P_k$ and no induced directed cycle of length at most $k-1$ is $\dic$-bounded.  
It is proved in~\cite{ABV22} that the class of digraphs with no induced directed cycle of length at least $4$ is not $\dic$-bounded (more precisely it is proved that $TT_3$ is not a hero in the class).

\section{Proofs}

\subsection{A few words on chordal graphs}\label{sec:chordal}

A graph $G$ is \emph{chordal} if it contains no induced cycle of length at least $4$. Chordal graphs have been studied for the first time in the pioneer work of Dirac~\cite{D61} who proved that every chordal graph $G$ is either a complete graph, or contains a clique $S$ such that $G \setminus S$ is disconnected. This easily implies that all chordal graphs can be obtained by gluing complete graphs along cliques. From this point of view, it is natural to try to generalize results on tournaments to orientations of chordal graphs. 

In this paper, we will use the two following well-known properties of chordal graphs. The first one formalizes the notion of `gluing along a clique'.

\begin{lemma}\cite{D61}\label{lem:chordal_intersection_on_clique}
Let $G_1$ and $G_2$ be two chordal graphs such that $V(G_1) \cap V(G_2)$ induces a complete graph both in $G_1$ and $G_2$. Then their union is a chordal graph.
\end{lemma}

A vertex is \emph{simplicial} if its neighborhood induces a complete graph. 
\begin{lemma}\cite{D61}\label{lem:simplicial}
Every chordal graph has a simplicial vertex. 
\end{lemma}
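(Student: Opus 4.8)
The plan is to prove, by induction on $|V(G)|$, a slightly stronger statement than Lemma~\ref{lem:simplicial}: every chordal graph $G$ with $|V(G)|\ge 1$ has a simplicial vertex, and moreover, if $G$ is not a complete graph, then it has two \emph{non-adjacent} simplicial vertices. The strengthening is exactly what is needed to make the induction close, since at the inductive step I will have to find a simplicial vertex that avoids a prescribed clique. For the base case $|V(G)|\le 2$ the statement is immediate: if $G$ is complete every vertex is simplicial, and the only non-complete graph on at most two vertices is $\overline{K_2}$, whose two vertices are non-adjacent and simplicial.

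For the inductive step, assume $|V(G)|\ge 3$. If $G$ is complete we are done. Otherwise, I would invoke Dirac's decomposition recalled above (from~\cite{D61}): there is a clique $S\subseteq V(G)$ such that $G-S$ is disconnected. Let $A$ be the vertex set of one connected component of $G-S$, and let $B=V(G)\setminus(A\cup S)$, so that $A,B\neq\emptyset$ and there is no edge of $G$ between $A$ and $B$. Put $G_1:=G[A\cup S]$ and $G_2:=G[B\cup S]$; these are induced subgraphs of $G$, hence chordal, and each has strictly fewer vertices than $G$ (because $B$, respectively $A$, is nonempty). I would then argue that $G_1$ has a simplicial vertex lying in $A$: if $G_1$ is complete this holds for every vertex of $A$; if $G_1$ is not complete, the induction hypothesis yields two non-adjacent simplicial vertices of $G_1$, and since $S$ is a clique at least one of them, say $u$, lies in $A$. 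In either case $u\in A$ is simplicial in $G_1$, and because $u$ has no neighbour in $B$, the neighbourhood of $u$ in $G$ equals its neighbourhood in $G_1$; hence $u$ is simplicial in $G$. This already gives a simplicial vertex of $G$. Applying the same reasoning to $G_2$ produces a vertex $w\in B$ that is simplicial in $G$, and since $u\in A$, $w\in B$, and $A$ is anticomplete to $B$, the vertices $u$ and $w$ are non-adjacent. This establishes the strengthened statement for $G$ and completes the induction.

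The main obstacle is recognising that the naive induction hypothesis ("$G$ has a simplicial vertex") is too weak: after splitting along the clique separator $S$, a simplicial vertex of $G_1$ might sit inside $S$, and then its neighbourhood in $G_1$ need not be its neighbourhood in $G$. Carrying the stronger hypothesis fixes this precisely because $S$, being a clique, cannot contain both vertices of a non-adjacent pair. The remaining ingredients—that induced subgraphs of chordal graphs are chordal, and that vertices of one component of $G-S$ have no neighbours in another—are routine.
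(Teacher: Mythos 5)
Your proof is correct and complete: the paper itself gives no proof of this lemma (it is cited from Dirac), and your argument is precisely Dirac's classical one — strengthen the induction hypothesis to ``two non-adjacent simplicial vertices unless complete,'' split along a clique separator $S$, and use that $S$, being a clique, cannot absorb both vertices of a non-adjacent pair. The one external ingredient you invoke, that a non-complete chordal graph has a clique separator, is exactly the statement the paper recalls from~\cite{D61}, and its standard proof (a minimal separator between two non-adjacent vertices is a clique, by chordality) does not rely on the existence of simplicial vertices, so there is no circularity.
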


\subsection{$\Delta(1,1,k)$ and transitive tournaments are heroes in \ocgs}\label{sec:heroes}

\begin{theorem}[Stearns, \cite{stearns}]\label{thm:stearns}
For each integer $n \geq 1$, a tournament with at least $2^{n-1}$ vertices contains a transitive tournament with $n$ vertices.
\end{theorem}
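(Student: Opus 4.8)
The plan is to prove this by induction on $n$. For the base case $n=1$ the statement is trivial, since $2^{0}=1$ and any tournament on at least one vertex contains $TT_1$ (a single vertex). So assume $n\geq 2$ and that the claim holds for $n-1$.

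For the inductive step, let $T$ be a tournament with $|V(T)|\geq 2^{n-1}$. I would fix an arbitrary vertex $v\in V(T)$ and consider the partition of $V(T)\setminus\{v\}$ into $v^+$ and $v^-$. Since $|v^+|+|v^-|=|V(T)|-1\geq 2^{n-1}-1$, a pigeonhole argument forces at least one of $v^+$, $v^-$ to have size at least $2^{n-2}$ (if both had size at most $2^{n-2}-1$, their sum would be at most $2^{n-1}-2$, a contradiction).

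Suppose first that $|v^+|\geq 2^{n-2}$. Applying the induction hypothesis to the subtournament of $T$ induced by $v^+$ yields a transitive tournament on $n-1$ vertices with topological ordering $w_1,\dots,w_{n-1}$; since $v$ has an arc to every vertex of $v^+$, the sequence $v,w_1,\dots,w_{n-1}$ is the topological ordering of a $TT_n$ in $T$. The other case, $|v^-|\geq 2^{n-2}$, is symmetric: the induction hypothesis gives a $TT_{n-1}$ with ordering $w_1,\dots,w_{n-1}$ inside the subtournament induced by $v^-$, and then $w_1,\dots,w_{n-1},v$ is the topological ordering of a $TT_n$. In either case we obtain the desired transitive subtournament, completing the induction.

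I do not expect any genuine obstacle here; the proof is elementary. The only points requiring a bit of care are the exact pigeonhole threshold (which is what makes the bound $2^{n-1}$ rather than something larger) and the observation that appending $v$ at the front of an out-neighbourhood ordering — or at the back of an in-neighbourhood ordering — preserves transitivity, which is immediate from the definitions of $v^+$ and $v^-$. We will not need the (standard) fact that the bound $2^{n-1}$ is best possible.
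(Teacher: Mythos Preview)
Your proof is correct and is the standard inductive argument for Stearns' theorem. Note, however, that the paper does not give its own proof of this statement: Theorem~\ref{thm:stearns} is quoted as a classical result with a citation to~\cite{stearns} and is used as a black box in the subsequent lemmas. So there is no proof in the paper to compare against; your argument simply supplies the well-known elementary justification that the authors omitted.
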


In the following, we define the \emph{triangle degree} of a vertex $x$ in a digraph $G$ as the maximum size of a collection of directed triangles that pairwise share the common vertex $x$ but no further vertices.

\begin{lemma}\label{lem:bounded_triangle_degree}
Every vertex of a $\Delta(1,1,k)$-free tournament has triangle degree less than $2^{2k-2}$. 
\end{lemma}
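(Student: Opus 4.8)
The plan is to bound the triangle degree of a vertex $x$ directly by analyzing the structure of the out-neighbourhood $x^+$ and the in-neighbourhood $x^-$ of $x$. A directed triangle through $x$ consists of $x$, a vertex $a \in x^+$, and a vertex $b \in x^-$ with the arc $ab \in A(G)$; that is, it corresponds to an arc from $x^+$ to $x^-$. A collection of triangles through $x$ sharing no vertex other than $x$ is then exactly a \emph{matching} of arcs directed from $x^+$ to $x^-$. So I need to show: if $T$ is a $\Delta(1,1,k)$-free tournament and $x \in V(T)$, then the set of arcs from $x^+$ to $x^-$ contains no matching of size $2^{2k-2}$.

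First I would suppose for contradiction that there is a matching $a_1b_1, \dots, a_mb_m$ with $a_i \in x^+$, $b_i \in x^-$, $a_ib_i \in A(G)$, and $m \ge 2^{2k-2}$. The key observation is that each pair $\{a_i, b_i\}$ together with $x$ forms a copy of $\vec C_3 = \Delta(1,1,1)$, and more relevantly, if I can find $k$ indices $i_1, \dots, i_k$ such that the $2k$ vertices $a_{i_1}, b_{i_1}, \dots, a_{i_k}, b_{i_k}$ induce a transitive tournament $TT_{2k}$ with a suitable topological ordering, I will get an induced $\Delta(1,1,k)$. Concretely, I want the induced order to be such that all the $a$'s come before all the $b$'s won't quite work; rather I want each consecutive pair $a_{i_j} \to b_{i_j}$ to appear with $a_{i_j}$ out of $x$ and $b_{i_j}$ into $x$. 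Actually the cleanest target: find indices so that $\{a_{i_1}, \dots, a_{i_k}\} \cup \{b_{i_1},\dots,b_{i_k}\}$ induces a $TT_{2k}$ in which the topological order alternates or at least places each $b$ after the corresponding $a$ — then $x$ plus this transitive tournament is $\Delta(1,1,k)$ since $x$ dominates all $a$'s... no. Let me reconsider: $\Delta(1,1,k) = \Delta(1, TT_1, TT_k)$, a vertex $x$, a single vertex $a$, a $TT_k$, with $x \to a$, $a \to TT_k$, $TT_k \to x$. So I really only need $x$, one $a_i$, and a $TT_k$ sitting inside $\{b_1,\dots,b_m\}$ with $a_i$ dominating all of it — but $a_i$ need not dominate the $b_j$'s for $j \ne i$.

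So the right approach is iterative/recursive on the matching, exploiting the absence of $\Delta(1,1,k)$ to force structure, and leveraging Stearns' theorem (Theorem~\ref{thm:stearns}) to extract large transitive subtournaments, with the bound $2^{2k-2} = 2^{2(k-1)} = (2^{k-1})^2$ strongly suggesting two nested applications of Stearns. Here is the plan: among $a_1 b_1, \ldots, a_m b_m$ with $m \geq 2^{2k-2}$, first apply Stearns to the tournament induced on $\{a_1, \ldots, a_m\}$: since $m \geq 2^{2k-2} \geq 2^{k-1}$ (for $k \ge 1$), wait I need more room. Reindex so that after restricting to a sub-matching of size $2^{k-1}$... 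Actually: apply Stearns to $\{a_1,\dots,a_m\}$, $m \ge 2^{2k-2}$, but I need $2^{k-1}$ of them transitive with additionally the corresponding $b$'s well-behaved, which is why I want $2^{k-1}$ blocks each of size $2^{k-1}$. The argument: partition is not quite it; instead pick greedily. Let $a_1$ be a source of $\{a_1,\dots,a_m\}$ — or rather, iterate: I'd consider the "dominant" structure. For each $i$, look at whether $a_i \to b_j$ or $b_j \to a_i$ for all $j$; if some $a_i$ dominates $2^{k-1}$ of the $b_j$'s ($j \ne i$), then by Stearns those $b_j$'s contain a $TT_k$, and $x, a_i$, that $TT_k$ form an induced $\Delta(1,1,k)$ (checking: $x \to a_i$ yes, $a_i \to$ each such $b_j$ yes, $b_j \to x$ yes — and we need it \emph{induced}, so also need no extra arcs forcing a different pattern, but $\Delta(1,1,k)$ as a tournament on $k+2$ vertices is determined once we know $x$ beats $a_i$, $a_i$ beats all $b_j$, all $b_j$ beat $x$, and the $b_j$'s form $TT_k$ — that IS the induced subdigraph), contradiction. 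So every $a_i$ dominates at most $2^{k-1}-1$ of the other $b_j$'s, meaning for each $i$ there are at least $m - 1 - (2^{k-1}-1) = m - 2^{k-1}$ indices $j$ with $b_j \to a_i$.

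The hard part will be closing the loop: from "each $a_i$ is dominated by most $b_j$'s" I want to find a $TT_k$ among the $a$'s dominated by a common structure, or symmetrically find $2^{k-1}$ indices forming a transitive pattern that, combined with reversal symmetry (reversing all arcs sends $\Delta(1,1,k)$-free to $\Delta(1,1,k)$-free since $\Delta(1,1,k)$ reversed is $\Delta(1,k,1)$... hmm, that's not the same digraph, so I cannot naively use reversal). Let me instead finish without reversal: having shown each $a_i$ has at least $m - 2^{k-1}$ indices $j$ with $b_j \to a_i$, take a sub-matching of size $2^{k-1}$ on which, by Stearns applied to $\{a_i\}$, we get a $TT_k$ among the $a$'s, say $a_{i_1}, \dots, a_{i_k}$ in topological order so $a_{i_1} \to \cdots \to a_{i_k}$; now I need a single vertex $y$ into $x$... no — I need a vertex $b$ with $a$ (a single one before the $TT_k$ of $a$'s) — wait, I should reconfigure which vertex plays which role. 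Use $\Delta(1,1,k) = \Delta(1, TT_1, TT_k)$ with the role of the central vertex played by $b_{i_0}$, the singleton by $x$, and the $TT_k$ by $a_{i_1}, \dots, a_{i_k}$: need $b_{i_0} \to x$ (true), $x \to$ each $a_{i_j}$ (true), each $a_{i_j} \to b_{i_0}$ (needs $b_{i_0}$ to be dominated by all $k$ of these $a$'s — the opposite of what we just derived!).

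This tension tells me the two Stearns applications must be interleaved more carefully, so let me restate the final plan: I will show that if $m \geq 2^{2k-2}$ then either some $a_i$ dominates $\geq 2^{k-1}$ of the $b_j$ ($j\neq i$) — done above, gives $\Delta(1,1,k)$ — or by a pigeonhole/Stearns argument on the bipartite-domination pattern between the $a$'s and $b$'s we can extract $2^{k-1}$ indices $S$ such that $\{a_i : i \in S\}$ induces a $TT_{|S|}$ \emph{and} for indices $i,j \in S$ with $i$ before $j$ in that order we have $b_i \to a_j$; combined with $a_i \to b_i$ this makes $\{a_i, b_i : i \in S\}$ into a transitive tournament (on $2 \cdot 2^{k-1} \geq 2^k$ vertices, well I want exactly the right count), whose topological order interleaves the $a$'s and $b$'s, and then $x$ together with any initial segment of appropriate length gives $\Delta(1,1,k)$. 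The main obstacle, and where I'd spend the real effort, is getting the domination pattern between distinct $b_i$ and $a_j$ under control simultaneously with making the $a$'s transitive — this is precisely a Ramsey-type statement and the bound $2^{2k-2}$ is tight enough that I expect to need Stearns (not the weaker Ramsey bound) applied to a cleverly chosen tournament of size roughly $2^{k-1}$ built from the matching, perhaps the "quotient" tournament on the pairs $\{a_i, b_i\}$ where $i \prec j$ iff ($a_i \to a_j$, say, after first cleaning up). I will organize the write-up as: (1) reduce to a matching from $x^+$ to $x^-$; (2) the "no $a_i$ dominates many $b_j$" dichotomy; (3) extraction of the transitive interleaved block via Stearns; (4) assemble the forbidden $\Delta(1,1,k)$ and conclude by contradiction.
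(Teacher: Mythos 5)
Your reduction of the triangle degree to a matching of arcs from $x^+$ to $x^-$ is correct, and your first branch is sound: if some $a_i$ beats at least $2^{k-1}$ of the $b_j$'s, Stearns gives a $TT_k$ among those $b_j$'s and $x \ra a_i \ra TT_k \ra x$ is a $\Delta(1,1,k)$. But the complementary branch is a genuine gap, not a deferred detail. From ``no $a_i$ beats $2^{k-1}$ of the $b_j$'s'' you only deduce that each $a_i$ is beaten by most of the $b_j$'s, which is the wrong direction: the other natural copy of $\Delta(1,1,k)$ here has apex some $b_j$ and uses $b_j \ra x \ra \{a\text{'s}\} \ra b_j$, so it needs many $a$'s pointing \emph{into} a common $b_j$, and your hypothesis gives no such $b_j$ (each $b_j$ is only guaranteed the single in-neighbour $a_j$ among the $a$'s). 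The ``interleaved transitive block'' you propose to extract is never constructed, and even if it were, you do not verify that it contains a $\Delta(1,1,k)$; you explicitly flag this as the part requiring ``the real effort.'' As written, the dichotomy does not close.

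The missing idea is that you do not need to control the whole bipartite domination pattern between the $a$'s and the $b$'s; a single well-chosen $b$ suffices. Apply Stearns once to $\{a_1,\dots,a_m\}$ with $m \ge 2^{2k-2}$ to get a transitive tournament $T$ on $2k-1$ of the $a_i$'s, and let $a_s$ be its sink (so every other vertex of $T$ points to $a_s$). Now examine only $b_s$, the matching partner of $a_s$, and split $V(T)$ into $T^+ = b_s^+ \cap V(T)$ and $T^- = b_s^- \cap V(T)$; one of these has size at least $k$ (note $a_s \in T^-$). If $|T^+| \ge k$, then $a_s \ra b_s \ra T^+ \ra a_s$ yields a $\Delta(1,1,k)$ with apex $a_s$; if $|T^-| \ge k$, then $b_s \ra x \ra T^- \ra b_s$ yields one with apex $b_s$. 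Both cases contradict $\Delta(1,1,k)$-freeness, and $2^{2k-2} = 2^{(2k-1)-1}$ is exactly what Stearns needs to produce $TT_{2k-1}$. Your two-Stearns plan and the quotient-tournament Ramsey argument are unnecessary; one application of Stearns plus this two-way split on a single $b$ finishes the proof.
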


\begin{proof}
Let $G$ be a $\Delta(1,1,k)$-free tournament and $x$ a vertex of $G$. 
Assume for contradiction that $x$ has triangle degree at least $2^{2k-2}$, that is, there exist pairwise distinct vertices $a_1, b_1, \dots, a_{2^{2k-2}}, b_{2^{2k-2}}$ such that $x \ra a_i \ra b_i \ra x$. 
By Theorem~\ref{thm:stearns} we can find a transitive tournament $T$ in $G[\{a_1, \dots, a_{2^{2k-2}}\}]$ of size at least $2k-1$. Up to renaming the vertices, we may assume that $T=G[\{a_1, \dots, a_{2k-1}\}]$ and that $a_1, \dots, a_{2k-1}$ is the topological ordering of $T$.   
Then look at $b_{2k-1}$. Set $b_{2k-1}^+ \cap T  = T^+$ and $b_{2k-1}^- \cap T = T^-$ and observe that $V(T) = T^+ \cup T^-$ since we are in a tournament. 
If $|T^+| \geq k$, then $T^+$ together with $b_{2k-1}$ and $a_{2k-1}$ contains a  $\Delta(1,1,k)$, a contradiction. So $|T^+|\leq k-1$. 
If $|T^-| \geq k$, then $T^-$ together with $b_{2k-1}$ and $x$ contains $\Delta(1,1,k)$, a contradiction. So $|T^+|\leq k-1$. Hence, $|V(T)| \leq 2k-2$, a contradiction. 
\end{proof}

\begin{theorem}
Transitive tournaments and $\Delta(1,1,k)$ are heroes in \ocgs. 
More precisely, $TT_k$-free \ocgs have dichromatic number at most $2^{k-1}-1$ and  $\Delta(1,1,k)$-free \ocgs have dichromatic number at most $2^{2k-2}$.  
\end{theorem}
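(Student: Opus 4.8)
For a $TT_k$-free orientation $G$ of a chordal graph, every clique of the underlying graph induces a subtournament, which is itself $TT_k$-free and therefore has fewer than $2^{k-1}$ vertices by Theorem~\ref{thm:stearns}; hence $\omega(G)\le 2^{k-1}-1$. Since chordal graphs are perfect, the underlying graph of $G$ admits a proper colouring with $\omega(G)$ colours, and any proper colouring of the underlying graph is a dicolouring of $G$, as each colour class is edgeless and thus acyclic. So $\dic(G)\le 2^{k-1}-1$.

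\textbf{$\Delta(1,1,k)$-free orientations.} I would argue by induction on $|V(G)|$ that every $\Delta(1,1,k)$-free orientation $G$ of a chordal graph admits an $N$-dicolouring with $N:=2^{2k-2}$, the base case being trivial. For the step, pick a simplicial vertex $x$ of the underlying graph (Lemma~\ref{lem:simplicial}). Then $G-x$ is again a $\Delta(1,1,k)$-free orientation of a chordal graph, so by induction it has an $N$-dicolouring $c$. Since $N(x)$ is a clique, $G[N(x)\cup\{x\}]$ is a tournament, and it is $\Delta(1,1,k)$-free as an induced subdigraph of $G$. It remains to extend $c$ to $x$ using a colour from $\{1,\dots,N\}$ so that $G$ has no monochromatic directed cycle; note that any such cycle must pass through $x$, since $c$ restricted to $G-x$ is already a dicolouring.

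The key step is that colouring $x$ with a colour $\gamma$ creates a monochromatic directed cycle if and only if it creates a monochromatic directed triangle through $x$. Only the forward direction needs an argument: given a monochromatic directed cycle $x\to w_1\to\cdots\to w_\ell\to x$, the vertices $w_1$ and $w_\ell$ both lie in $N(x)$ and hence are adjacent (they are distinct unless $\ell=1$, impossible for a directed cycle; and $\ell=2$ forces the arc $w_1\to w_2$); if the arc between them is $w_1\to w_\ell$ then $x\to w_1\to w_\ell\to x$ is the desired monochromatic triangle, and if it is $w_\ell\to w_1$ then $w_1\to\cdots\to w_\ell\to w_1$ is a monochromatic directed cycle inside $G-x$, contradicting that $c$ is a dicolouring of $G-x$. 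Call a colour $\gamma$ \emph{blocked} for $x$ if there exist $a,b\in N(x)$ with $a\to x$, $x\to b$, $b\to a$ and $c(a)=c(b)=\gamma$, i.e.\ colouring $x$ with $\gamma$ closes a monochromatic directed triangle $x\to b\to a\to x$. For each blocked colour fix such a triangle; triangles belonging to distinct blocked colours meet only in $x$, because their other two vertices carry distinct colours. Hence the number of blocked colours is at most the triangle degree of $x$ in $G[N(x)\cup\{x\}]$, which by Lemma~\ref{lem:bounded_triangle_degree} is less than $2^{2k-2}=N$. So some colour in $\{1,\dots,N\}$ is free for $x$, completing the induction.

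\textbf{Main obstacle.} The heart of the argument is the reduction from monochromatic directed cycles to monochromatic directed triangles through the newly added vertex; this is exactly where chordality (via a simplicial vertex with clique neighbourhood) is used, and it is what makes the local triangle bound of Lemma~\ref{lem:bounded_triangle_degree} applicable. Once that reduction is in place, the only thing to check is that blocked colours inject into a family of directed triangles pairwise sharing only $x$, which is immediate from the colouring constraint; the remaining steps are routine bookkeeping.
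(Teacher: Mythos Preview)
Your proof is correct and follows essentially the same approach as the paper: induct by removing a simplicial vertex, invoke Lemma~\ref{lem:bounded_triangle_degree} to bound the triangle degree, and conclude that some colour is free for $x$. The only cosmetic difference is in the cycle-to-triangle reduction step: the paper argues it globally (a shortest monochromatic directed cycle is induced, hence a triangle by chordality), whereas you argue it locally via the fact that the first and last internal vertices of any cycle through $x$ lie in the clique $N(x)$; both are valid and you are slightly more explicit than the paper about why the number of blocked colours is bounded by the triangle degree.
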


\begin{proof}
A $TT_k$-free \ocg has no clique of size at least $2^{k-1}-1$ by Theorem~\ref{thm:stearns}, and since chordal graphs are perfect graphs, its underlying graph has chromatic number at most $2^{k-1}-1$ and thus dichromatic number at most $2^{k-1}-1$.

We now prove that $\Delta(1,1,k)$-free \ocgs have dichromatic number at most $2^{2k-2}$.  We proceed by induction on the number of vertices. 
Let $G$ be a $\Delta(1,1,k)$-free \ocg. Let $x$ be a simplicial vertex of the underlying graph of $G$.  Note that the triangle degree of $x$ in $G$ is equal to the triangle degree of $x$ in the subtournament $G[\{x\} \cup x^+ \cup x^-]$, which by Lemma~\ref{lem:bounded_triangle_degree} is less than $2^{2k-2}$.

We can then find an acyclic colouring of $G \setminus x$ with $2^{2k-2}$ colours by induction,  and since the triangle degree of $x$ in $G$ is less than $2^{2k-2}$, there is a colour $i \in \{1, \dots, 2^{2k-2}\}$ such that assigning $i$ to $x$ does not produce a monochromatic directed triangle. The resulting colouring is thus an acyclic colouring of $G$: For if there existed a monochromatic directed cycle in this colouring of $G$, there would also have to exist an \emph{induced} monochromatic directed cycle, and since all induced cycles in $G$ have length $3$, this cycle would have to be a monochromatic directed triangle. However, such a triangle does not exist, neither through $x$ nor in $G\setminus x$ (by inductive assumption). 
\end{proof}

\subsection{Constructions} \label{sec:nonheroes}

\subsubsection{$\Delta(1,2,2)$ is not a hero in orientations of chordal graphs}\label{subsec:122}

In this subsection, we present a construction of \ocgs with arbitrarily large dichromatic number but containing no copy of $\Delta(1,2,2)$. 

\begin{theorem}\label{thm:ce_122}
$\Delta(1,2,2)$ is not a hero in \ocgs.
\end{theorem}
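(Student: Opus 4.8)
The plan is to construct, for every integer $d$, an orientation $G_d$ of a chordal graph that contains no induced copy of $\Delta(1,2,2)$ but has $\dic(G_d) \ge d$. The natural approach is a recursive ``blow-up'' construction that forces the dichromatic number up by one at each step while keeping chordality and $\Delta(1,2,2)$-freeness as invariants. Concretely, I would start from a directed triangle (or a single vertex) and, at each stage, take many disjoint copies of the current gadget $G_{d-1}$ and attach new apex vertices so that in any $(d-1)$-dicolouring of the copies one can always locate a monochromatic ``reachable pair'' that the apex structure completes into a monochromatic directed cycle; a counting/pigeonhole argument over the finitely many behaviours of a dicolouring on one copy then shows $d-1$ colours do not suffice. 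The point of working with orientations of chordal graphs rather than tournaments is that the underlying graph can be kept sparse: new vertices are only made adjacent to a clique of previously constructed vertices, so Lemma~\ref{lem:chordal_intersection_on_clique} guarantees chordality is preserved throughout.

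The key steps, in order, would be: (1) fix the gadget shape — decide exactly which cliques of $G_{d-1}$ the new apex vertices see and how they are oriented, making sure every new vertex's neighbourhood is a clique so the result stays chordal; (2) prove the chordality invariant by induction, invoking Lemma~\ref{lem:chordal_intersection_on_clique} at the gluing step; (3) prove the $\Delta(1,2,2)$-freeness invariant, by a case analysis on how the four ``non-apex'' vertices of a hypothetical induced $\Delta(1,2,2)$ could distribute among the copies of $G_{d-1}$ and the newly added vertices — here one uses that $\Delta(1,2,2)$ has a dominating-type structure (the apex of $\Delta(1,2,2)$ has both an out-neighbour and an in-neighbour among a tightly connected 4-vertex set) that cannot be realized across the sparse cuts introduced by the construction; (4) prove the dichromatic lower bound $\dic(G_d) \ge d$ by assuming a $(d-1)$-dicolouring, restricting it to each of the many copies, applying pigeonhole to find many copies with the same ``colour profile,'' and then showing the apex vertices together with these copies force a monochromatic directed cycle.

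I expect the main obstacle to be step (3): ensuring that no induced $\Delta(1,2,2)$ sneaks in. Because $\Delta(1,2,2)$ is a tournament on five vertices, an induced copy would have to live inside a clique of $G_d$, and one must argue that the only cliques large enough are those inside a single copy of $G_{d-1}$ (handled by induction) or those consisting of a few apex vertices together with a clique of one copy — and in the latter case the prescribed orientation of the apex–clique arcs must be shown to avoid the specific back-and-forth arc pattern $x \to H_1 \to H_2 \to x$ that defines $\Delta(1,2,2)$. Getting the apex orientations simultaneously right for the $\Delta(1,2,2)$-freeness argument and strong enough for the colouring lower bound is the delicate balance; a secondary concern is bookkeeping the blow-up width (number of copies at each level) so the pigeonhole in step (4) goes through, which should just be a (rapidly growing but explicit) function of $d$.
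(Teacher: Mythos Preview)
Your high-level template (recursive gadget, prove chordality, $\Delta(1,2,2)$-freeness, and the dichromatic lower bound by induction) matches the paper, but you never commit to a construction, and the mechanism you do sketch --- many copies of $G_{d-1}$, apex vertices attached to unspecified cliques, then pigeonhole over the finitely many ``colour profiles'' of a copy --- is both more complicated than what is needed and not clearly compatible with step~(3). If an apex vertex is adjacent to a clique of size $\ge 4$ inside a copy, you have produced $5$-cliques mixing apex and copy vertices, and you have given no reason why their orientation avoids $\Delta(1,2,2)$; this is exactly the ``delicate balance'' you flag, and your plan does not resolve it.

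The paper's construction dissolves both difficulties at once. Take a transitive tournament $T\simeq TT_{k+1}$ as a backbone and, for \emph{each arc} $e=uv$ of $T$, glue a fresh copy $G_k^e$ of $G_k$ by adding all arcs $v\to y$ and $y\to u$ for $y\in V(G_k^e)$. So every copy touches exactly two backbone vertices, and those two form an arc that makes a directed triangle with every vertex of the copy. Chordality is immediate from Lemma~\ref{lem:chordal_intersection_on_clique}. For $\Delta(1,2,2)$-freeness: a putative copy $A$ is a tournament, hence meets at most one $G_k^f$ with $f=xy$; then $A\cap V(T)\subseteq\{x,y\}$, and strong connectivity of $\Delta(1,2,2)$ forces $A\cap V(T)=\{x,y\}$; but then every other vertex of $A$ forms a directed triangle with the arc $xy$, and one checks that no arc of $\Delta(1,2,2)$ lies in a directed triangle with all three remaining vertices. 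For the lower bound there is no colour-profile bookkeeping at all: in any $k$-dicolouring the $(k+1)$-vertex tournament $T$ has a monochromatic arc $uv$, the attached copy $G_k^{uv}$ (having dichromatic number $k$) uses all $k$ colours, so some $w\in V(G_k^{uv})$ shares the colour of $u$ and $v$, and $u\to v\to w\to u$ is a monochromatic triangle.

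In short, the missing idea is to let the apex structure be a transitive tournament and to glue one gadget per \emph{arc}, so that each copy is adjacent to only two backbone vertices; this simultaneously makes the $\Delta(1,2,2)$-freeness a two-line structural observation and replaces your colour-profile pigeonhole with ordinary pigeonhole on the $k+1$ backbone vertices.
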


\begin{proof}
We inductively construct a sequence $(G_k)_{k \in \mathbb{N}}$ of digraphs such that for each $k \ge 1$, the digraph $G_k$ is an orientation of a chordal graph with no copy of $\Delta(1,2,2)$ satisfying $\vec{\chi}(G_k)=k$.

Let $G_1$ be the digraph on one vertex, and having defined $G_{k}$, define $G_{k+1}$ as follows. 
Start with a copy $T$ of $TT_{k+1}$, and for each arc $e=uv$ of $T$, create a distinct copy $G_{k}^e$ of $G_{k}$ (vertex-disjoint for different choices of the arc $e \in A(T)$, and all vertex-disjoint from $T$). Next, for each $e=uv \in A(T)$, we add all the arcs $vy$ and $yu$ for every $y \in V(G_k^e)$.  
This completes the description of the digraph $G_{k+1}$.

For every arc $e=uv \in A(T)$, consider the underlying graph of $G_{k+1}[\{u,v\} \cup V(G_k^e)]$. By definition, this graph is obtained from the chordal underlying graph of $G_k^e$ by adding an adjacent pair of universal vertices. Since the addition of universal vertices preservers the chordality of a graph, we can see that the underlying graph of $G_{k+1}[\{u,v\} \cup V(G_k^e)]$ is chordal, for every choice of $e$. Since $T$ and $G_{k+1}[\{u,v\} \cup V(G_k^e)]$ intersect in  the clique $\{u,v\}$, we may now repeatedly apply Lemma~\ref{lem:chordal_intersection_on_clique} to see that $G_{k+1}$ is still an \ocg.

Next, let us prove that $G_{k+1}$ does not contain $\Delta(1,2,2)$. Assume towards a contradiction that $G_{k+1}$ contains a copy of $\Delta(1,2,2)$, induced by the set of vertices $A \subseteq V(G_{k+1})$. Since the copies $G_{k}^e, e \in A(T)$ of $G_k$ are vertex disjoint and have no connecting arcs, and since $A$ induces a tournament, $A$ intersects at most one of the vertex sets of these copies. Let $f=xy \in A(T)$ be a fixed edge such that $A \subseteq V(T) \cup V(G_k^f)$. 

Since $G_k^f$ is $\Delta(1,2,2)$-free by inductive assumption, it follows that $A$ intersects $V(T)$ in at least one vertex. 
As $\Delta(1,2,2)$ is not acyclic, $A$ is also not fully contained in $V(T)$, and thus $A \cap V(G_k^f) \neq \emptyset$. 

The argument above implies that $A \cap V(T) \subseteq \{x,y\}$, as $x$ and $y$ are the only vertices in $V(T)$ whose neighborhoods in $G_{k+1}$ intersect $V(G_k^f)$. 
In fact, we must have $A \cap V(T)=\{x,y\}$, for if $|A\cap V(T)|=1$ then either $x$ would form a sink in $G_{k+1}[A]$ or $y$ would form a source in $G_{k+1}[A]$, both of which are impossible, since $G_{k+1}[A] \simeq \Delta(1,2,2)$ is strongly connected. Note that by definition of $G_{k+1}$, every vertex in $A \setminus \{x,y\} \subseteq V(G_k^f)$ must form a directed triangle together with the arc $xy$. 

But $A$ induces $\Delta(1,2,2)$ in $G_{k}$ and there is no arc in $\Delta(1,2,2)$ forming a directed triangle with every other vertex, as there is no arc from the only vertex of $\Delta(1,2,2)$ of outdegree $1$ to the only vertex of $\Delta(1,2,2)$ of indegree $1$, a contradiction.  This shows that $G_{k+1}$ is indeed $\Delta(1,2,2)$-free.

Finally, let us prove that $\dic(G_k)=k+1$. A $(k+1)$-dicolouring of $G_k$ can easily be obtained by piecing together individual $k$-dicolourings of the copies $G_k^e, e \in A(T)$ of $G_k$ and assigning to all vertices in the transitive tournament $T$ a new $(k+1)^{th}$ colour not appearing in the copies. To show that $\dic(G_{k+1})>k$, assume towards a contradiction that $G_k$ admits a $k$-dicolouring $c:V(G_{k+1}) \rightarrow \{1,\ldots,k\}$. 
Then, since $T$ is a clique on $k+1$ vertices, there exists a monochromatic arc $e=uv$. Let $i \in \{1,\ldots,k\}$ be such that $c(u)=c(v)=i$. Then since $\dic(G_k)=k$, the copy $G_k^e$ of $G_k$ glued to $uv$ must use all $k$ colours in the dicolouring induced on it by $c$, and in particular there exists some $w \in V(G_k^e)$ such that $c(w)=i$. Now, however, the directed triangle $x \rightarrow y \rightarrow w \rightarrow x$ is monochromatic, a contradiction to our choice of $c$. This completes the proof that $\dic(G_{k+1})=k+1$, and hence the proof of the theorem. 
\end{proof}

\subsubsection{$\vec C_3 \Ra K_1$ is not a hero in orientations of chordal graphs}\label{subsec:1C3}

All along this subsection, we denote by $\mc C$ the class of $(\vec C_3 \Ra K_1)$-free \ocgs. The goal of this subsection is to construct digraphs in $\mc C$ with arbitrarily large dichromatic number.

\begin{lemma}\label{lem:directedsum}
Let $G, F \in \mc C$ and let $T$ be a transitive subtournament of $G$. Then the digraph $K$ obtained from $G$ and $F$ by adding every arc from $T$ to $F$ is in $\mc C$. 
\end{lemma}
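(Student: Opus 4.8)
The plan is to verify the two defining properties of $\mc C$ separately: that the underlying graph of $K$ is chordal, and that $K$ contains no induced copy of $\vec C_3 \Ra K_1$. Throughout I would assume, as is standard, that the copies of $G$ and $F$ are vertex-disjoint, so that $V(K)=V(G)\cup V(F)$ and the \emph{only} arcs of $K$ joining $V(G)$ to $V(F)$ are the arcs from $V(T)$ to $V(F)$; I will use this ``one-way'' feature repeatedly.

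For chordality, I would apply Lemma~\ref{lem:chordal_intersection_on_clique} to the underlying graph of $G$ and the underlying graph of $K[V(T)\cup V(F)]$. The latter is obtained from the chordal underlying graph of $F$ by adding the vertices of $V(T)$ as a ``universal clique'' (each one adjacent to every vertex of $V(F)$ and to one another); since adding universal vertices preserves chordality, this graph is chordal. These two graphs intersect exactly in the vertex set $V(T)$, which is a clique in both, and their union is the underlying graph of $K$; hence Lemma~\ref{lem:chordal_intersection_on_clique} shows that $K$ is an \ocg.

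For the $(\vec C_3\Ra K_1)$-freeness, suppose for contradiction that some $A\subseteq V(K)$ induces a copy of $\vec C_3\Ra K_1$, say with directed triangle $a\to b\to c\to a$ and sink $w$ receiving an arc from each of $a,b,c$. The underlying graph of $\vec C_3\Ra K_1$ is $K_4$, so $A$ is a clique of $K$. If $A\subseteq V(G)$ or $A\subseteq V(F)$, then $K[A]$ is an induced $\vec C_3\Ra K_1$ in $G$ or in $F$, contradicting $G\in\mc C$ or $F\in\mc C$. Otherwise $A$ meets $V(F)$, and since $A$ is a clique while the only edges of $K$ between $V(G)$ and $V(F)$ are incident to $V(T)$, we get $\emptyset\neq A\cap V(G)\subseteq V(T)$; moreover every arc of $K[A]$ between its $V(G)$-part and its $V(F)$-part points from the former to the latter. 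Now I localize the four vertices. The sink $w$ has out-degree $0$ in $K[A]$, so it cannot lie in $A\cap V(G)\subseteq V(T)$ (every vertex of which has out-arcs to all of $A\cap V(F)\neq\emptyset$); hence $w\in V(F)$. For the triangle, observe that if $\{a,b,c\}$ met both $V(G)$ and $V(F)$ then one of these two parts of the triangle would be a single vertex, and traversing the $3$-cycle $a\to b\to c\to a$ would force an arc from the $V(F)$-part to the $V(G)$-part, which does not exist. Thus $\{a,b,c\}\subseteq V(F)$ or $\{a,b,c\}\subseteq V(G)$. In the first case $A\subseteq V(F)$, already excluded; in the second case $a,b,c$ lie in $V(T)$ and form a directed triangle, contradicting that $T$ is transitive. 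This contradiction shows $K\in\mc C$.

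The steps that need the most care are the ``directed triangle cannot straddle the cut'' argument (which hinges precisely on the absence of arcs from $V(F)$ back to $V(G)$, together with a triangle vertex being forced onto the wrong side) and the single use of transitivity of $T$ to eliminate the last case. I do not anticipate a genuine obstacle: once the structure of $\vec C_3\Ra K_1$ (a directed triangle dominating one sink) and the one-way nature of the added arcs are made explicit, the proof is essentially bookkeeping.
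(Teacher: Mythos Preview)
Your proof is correct and follows essentially the same approach as the paper: the chordality argument via universal vertices and Lemma~\ref{lem:chordal_intersection_on_clique} is identical, and the $(\vec C_3\Ra K_1)$-freeness argument uses the same ingredients (the copy must meet both sides, hence live in $V(T)\cup V(F)$; the one-way arcs force the triangle to one side; transitivity of $T$ rules out the $V(T)$ side; then the sink is forced into $V(F)$ as well). The only cosmetic difference is that the paper localizes the triangle first and then the sink, whereas you do the reverse.
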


\begin{proof}
 Given a graph $G$, the graph obtained by adding a vertex $v$ adjacent with every vertex of $G$ results in a chordal graph  as, if $v$ lies in an induced cycle, there is an arc between $v$ and every other vertex of this cycle, which is thus a triangle.
 Thus, adding vertices of $T$ to $F$ one by one, together with all arcs from $T$ to $F$, returns a chordal graph $F'$. The intersection of $V(F')$ and  $V(G)$ is $T$, which is a tournament. Hence, by Lemma~\ref{lem:chordal_intersection_on_clique}, the union of $G$ and $F'$, that is $K$, is an \ocg.

Suppose for contradiction that $K$ contains a subgraph $H$ isomorphic to  $\vec C_3 \Ra K_1$.
Since $G,F \in \mathcal C$, $H$ must intersect both $G$ and $F$ and since $H$ is a tournament, it must be included in $T \cup F$. 
Since there is no arc from $F$ to $T$, the directed triangle of $H$ cannot intersect both $T$ and $F$, and hence must be included in $F$ (as $T$ is a transitive tournament and thus have no directed triangle). The fourth vertex of $H$ contains the directed triangle in its in-neighborhood, and thus must also be in $F$, a contradiction.  
\end{proof}

\begin{lemma}\label{lem:add_vertex_TT}
Let $G \in \mc C$ and let $T$ be a transitive subtournament of $G$ on vertices $\{v_1, \dots, v_n\}$ such that $v_1, \dots, v_n$  is the topological ordering of $T$. 
Then for every $j \in \{1, \dots, n-1\}$, the digraph $F$ obtained from $G$ by adding a vertex $x$ that sees $v_1, \dots, v_j$ and is seen by $v_{j+1}, \dots, v_n$ is in $\mc C$. 
\end{lemma}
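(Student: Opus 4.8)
The plan is to verify separately the two conditions for $F\in\mc C$: that the underlying graph of $F$ is chordal, and that $F$ has no induced copy of $\vec C_3\Ra K_1$. The single structural fact I will use throughout is that in $F$ the new vertex $x$ is adjacent to exactly the set $V(T)=\{v_1,\dots,v_n\}$ and to no other vertex, and that $V(T)$ is a clique in the underlying graph of $G$ because $T$ is a tournament.

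For chordality I would argue as in the first paragraph of the proof of Lemma~\ref{lem:directedsum}: making $x$ adjacent to every vertex of $V(T)$ turns $V(T)\cup\{x\}$ into a complete, hence chordal, graph, which meets the underlying graph of $G$ exactly in the clique $V(T)$; Lemma~\ref{lem:chordal_intersection_on_clique} then shows that the union of these two graphs, which is precisely the underlying graph of $F$, is chordal, so $F$ is indeed an \ocg.

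The substance of the lemma is the absence of the forbidden configuration, and this is where the real work is. Suppose towards a contradiction that $F[A]\simeq\vec C_3\Ra K_1$ for some $A\subseteq V(F)$, with directed triangle $p\ra q\ra r\ra p$ and sink $w$ dominated by $p,q,r$. Since $G\in\mc C$ we must have $x\in A$, and since $A$ induces a tournament, every vertex of $A\setminus\{x\}$ is a neighbour of $x$ and hence lies in $V(T)$. Moreover $x\neq w$, for otherwise $\{p,q,r\}\subseteq V(T)$ would induce a directed triangle, which is impossible since any three vertices of the transitive tournament $T$ induce $TT_3$. Using the rotational symmetry of $\vec C_3$ I may therefore assume $x=p$, so that $q,r,w\in V(T)$, $F$ contains the arcs $x\ra q$, $r\ra x$ and $x\ra w$, and $q\ra r$, $q\ra w$, $r\ra w$ are arcs inside $T$.

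It then remains to extract a contradiction from the way $x$ attaches to $T$, which is the only genuinely non-bookkeeping point. Writing $\mathrm{ind}(v_i)=i$ for the position of a vertex of $T$ in the topological order $v_1,\dots,v_n$, we have: $\mathrm{ind}(r)\ge j+1$ because $x$ is seen by $r$; $\mathrm{ind}(w)\le j$ because $x$ sees $w$; and $\mathrm{ind}(r)<\mathrm{ind}(w)$ because $r\ra w$ is an arc inside $T$. Chaining these yields $j+1\le\mathrm{ind}(r)<\mathrm{ind}(w)\le j$, which is absurd. The key observation is exactly this incompatibility: the sink $w$ of $\vec C_3\Ra K_1$ must be dominated both by $x$, which forces it among $v_1,\dots,v_j$, and by the in-neighbour $r$ of $x$ on the triangle, which forces it strictly later than $v_j$ in the topological order. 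Hence $F$ contains no induced $\vec C_3\Ra K_1$, and so $F\in\mc C$.
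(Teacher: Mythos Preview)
Your proof is correct. The chordality step is the same as in the paper, and your analysis of the forbidden configuration is sound: locating $x$ on the triangle (since the sink cannot be $x$), then extracting the index contradiction $j+1\le\mathrm{ind}(r)<\mathrm{ind}(w)\le j$ from the arcs $r\to x$, $x\to w$, $r\to w$ is clean and airtight.

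The paper takes a slightly different, more uniform route for the second part. Instead of a case analysis on the position of $x$ in the copy, it checks in one sweep that for \emph{every} vertex $u$ of $K=V(T)\cup\{x\}$ the in-neighbourhood $u^-\cap K$ induces a transitive tournament (for $x$ it is $\{v_{j+1},\dots,v_n\}$; for $v_i$ with $i\le j$ it is $\{v_1,\dots,v_{i-1},x\}$ with $x$ a source; for $v_k$ with $k>j$ it is $\{v_1,\dots,v_{k-1}\}$). Since the sink of $\vec C_3\Ra K_1$ must have a directed triangle in its in-neighbourhood, no such copy can live inside $K$. This avoids singling out where $x$ sits and handles all four vertices symmetrically; your argument, by contrast, zooms in on the one arc $r\to w$ that does the damage. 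Both approaches are short and essentially equivalent in strength; the paper's is marginally more conceptual, yours marginally more concrete.
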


\begin{proof}
By Lemma~\ref{lem:chordal_intersection_on_clique}, $F$ is an \ocg. Assume for contradiction that $F$ contains a copy $H$ of $K_1 \Ra \vec C_3$. Since $G \in \mc C$, $H$ must contain $x$ and thus be included in $G[K]$ where $K=V(T) \cup \{x\}$. Now, observe that $x^- \cap K$, $v_i^- \cap K$ for $i=1, \dots, j$ and $v_k^- \cap K$ for $k=j+1, \dots, n$ are transitive tournaments. Thus $G[K]$ cannot contain $H$, since one vertex in $H$ includes a directed triangle in its in-neighbourhood.  
\end{proof}

In the following, given a $k$-colouring $c:V(F) \rightarrow \{1,\ldots,k\}$ of a digraph $F$, we say that a subdigraph of $F$ is \emph{rainbow} (with respect to $c$), if its vertices are assigned pairwise distinct colours. 

\begin{lemma}\label{lem:rainbow}
Let $G \in \mc C$ such that $\dic(G)=k$. There exists a digraph $F=F(G) \in \mc C$ with $\dic(F)=k$ satisfying the following property: For every $k$-dicolouring of $F$, there exists a rainbow transitive tournament of size $k$ contained in $F$.
\end{lemma}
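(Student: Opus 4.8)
The goal is: given $G \in \mc C$ with $\dic(G)=k$, build $F = F(G) \in \mc C$ with $\dic(F)=k$ such that every $k$-dicolouring of $F$ contains a rainbow $TT_k$. The natural strategy is an amplification/product-type construction: take many copies of $G$ and glue them along transitive tournaments using the operations certified safe by Lemmas \ref{lem:directedsum} and \ref{lem:add_vertex_TT}, so that in any $k$-dicolouring, some copy must be "fully colourful" and this colourfulness propagates to a rainbow $TT_k$.

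**The approach I would take.**
Since $\dic(G)=k$, in every $k$-dicolouring of $G$ at least one colour class is nonacyclic, hence every $k$-dicolouring of $G$ contains a monochromatic directed triangle; in particular every $(k-1)$-dicolouring of $G$ fails, so every $(k-1)$-colouring of $V(G)$ leaves a monochromatic $\vec C_3$. I would exploit this by a recursive construction on $k$. For the base case $k=1$, a single vertex works. For the inductive step, suppose we have $F' = F'(G')$ for all $G'$ with dichromatic number $k-1$. Start from a transitive tournament $T = TT_N$ with $N$ large (to be chosen via Stearns/Ramsey-type bounds, Theorem \ref{thm:stearns}), and for each arc $e = uv$ of $T$ attach a private copy $G^e$ of $G$ with all arcs from $v$ into $G^e$ and all arcs from $G^e$ to $u$ — exactly the gadget used in the proof of Theorem \ref{thm:ce_122}, which keeps us inside chordal orientations and, by an argument like Lemma \ref{lem:directedsum}/\ref{lem:add_vertex_TT}, inside $\mc C$ (one must check the $(\vec C_3 \Ra K_1)$-freeness: any copy of $\vec C_3 \Ra K_1$ must live in $V(T) \cup V(G^e)$ for a single $e$, and then use that the directed triangle cannot use two vertices of $T$ and that the dominated vertex forces everything into $G^e$, contradicting $G^e \in \mc C$). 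Given a $k$-dicolouring $c$ of this object: $T$ is a clique on $N$ vertices, so by pigeonhole a huge rainbow-free sub-transitive-tournament appears in $T$, and within it some colour repeats on an arc $e=uv$ with $c(u)=c(v)=i$; then on $G^e$ the colour $i$ cannot be used at all, since a vertex $w$ of colour $i$ in $G^e$ would give a monochromatic $\vec C_3$ on $\{u,v,w\}$ — so $G^e$ is $(k-1)$-dicoloured, forcing a monochromatic $\vec C_3$ inside $G^e$. This already recovers $\dic = k$ (piece colourings together, give $T$ one fresh colour — wait, $T$ needs $\le k$ colours only if $N \le k$, so instead one needs the copies to supply colour classes; the clean way is to make the whole thing a single copy-gadget only at the top and do the real amplification inside).

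**The key technical move and the main obstacle.**
The delicate part is engineering the gluing so that a repeated colour on $T$ not only kills one colour on $G^e$ but, recursively, forces a rainbow $TT_{k-1}$ there which we can then extend by $u$ (or $v$) to a rainbow $TT_k$. To make $u$ extend the rainbow $TT_{k-1}$ found in $G^e$, I want $u$ to dominate all of $G^e$ (or be dominated by all of it) so that $u$ together with any transitive subtournament of $G^e$ forms a transitive tournament, and $c(u)=i$ differs from the $k-1$ colours used on $G^e$. Thus the right construction is: iterate — $F_1 = K_1$; given $F_{k-1}$, take $T=TT_N$, and for each arc attach a copy of $F(G')$ where $G'$ ranges appropriately... but $G$ is fixed, so actually I want, for each arc $e=uv$ of $T$, to attach a copy of the already-built gadget $F'$ for the digraph obtained from $G$ — no: cleaner, attach a copy of $G$ itself, observe it becomes $(k-1)$-dicoloured, and recurse *by induction on $k$ applied to this copy* to pull out a rainbow $TT_{k-1}$, then prepend $u$. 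So $F(G)$ should be defined by: if $\dic(G)=k$, take a large $TT_N$, hang a copy $F(G,e)$ of a $(k-1)$-level gadget on each arc — but the $(k-1)$-level gadget must itself be built from $G$, which forces a simultaneous recursion. The main obstacle I anticipate is precisely setting up this recursion cleanly: we cannot apply the lemma to $G$ at level $k-1$ because $\dic(G)$ is still $k$; the fix is to prove a slightly more general statement by induction on $k$ — "for every $G\in\mc C$ and every $\ell \le \dic(G)$, there is $F_\ell(G) \in \mc C$ with $\dic(F_\ell(G)) = \dic(G)$ such that every $\dic(G)$-dicolouring of $F_\ell(G)$, restricted suitably, exhibits a rainbow transitive tournament on $\ell$ vertices whose colours avoid a prescribed set when needed" — and chain these. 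Bounding $N$ (via Theorem \ref{thm:stearns}: we need $N$ large enough that any $k$-colouring of $TT_N$ has a long rainbow-monochromatic structure, i.e. a monochromatic arc inside a chosen transitive piece regardless of how the other colours are distributed — a straightforward pigeonhole giving $N$ roughly a tower in $k$) and verifying membership in $\mc C$ at each gluing step via Lemmas \ref{lem:directedsum} and \ref{lem:add_vertex_TT} are then routine.
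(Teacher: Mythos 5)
There is a genuine gap, and it sits exactly where you flagged uncertainty. The gadget you propose to reuse from Theorem~\ref{thm:ce_122} --- for each arc $e=uv$ of a large $TT_N$, attach a copy $G^e$ of $G$ with all arcs $v\to y$ and $y\to u$ for $y\in V(G^e)$ --- does \emph{not} preserve membership in $\mc C$. Your sketched freeness check assumes the dominated vertex of a copy of $\vec C_3\Ra K_1$ must lie in $G^e$, but it can be $u$ itself: every vertex of $G^e$ sends an arc to $u$, so any directed triangle inside $G^e$ together with $u$ is an induced $\vec C_3\Ra K_1$. Since $\dic(G)=k\ge 2$ forces $G$ to contain a directed cycle, hence (by chordality) an induced $\vec C_3$, the construction leaves $\mc C$ immediately for every $k\ge 2$. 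The back-arcs $y\to u$ are precisely what must be avoided; this is why the paper only ever glues with \emph{one-directional} arcs from a transitive tournament into a fresh copy (Lemma~\ref{lem:directedsum}) or adds a single vertex splitting a topological order (Lemma~\ref{lem:add_vertex_TT}). A secondary unresolved issue, which you also noticed mid-argument, is that your construction does not keep $\dic(F)=k$: hanging copies of $G$ on arcs of a large clique with back-arcs is exactly the mechanism that \emph{raises} the dichromatic number in Theorem~\ref{thm:ce_122}, and your proposed recursion ("a slightly more general statement by induction on $k$") is never actually formulated, so the argument does not close.

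For comparison, the paper's proof inducts not on $k$ but on the size $i$ of the rainbow transitive tournament: given $F^{(i)}$ such that every $k$-dicolouring contains a rainbow $TT_i$, it attaches to \emph{every} induced copy $X$ of $TT_i$ in $F^{(i)}$ a fresh copy $G_X$ of $G$ with all arcs from $X$ to $G_X$ (no back-arcs, so Lemma~\ref{lem:directedsum} applies, no new directed cycles are created, and $\dic$ stays equal to $k$). In any $k$-dicolouring, the rainbow $TT_i$ guaranteed by induction sits on some $X$ using $i<k$ colours, and since $\dic(G_X)=k$ the copy $G_X$ must contain a vertex coloured outside those $i$ colours; that single vertex extends $X$ to a rainbow $TT_{i+1}$. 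This sidesteps both of your obstacles: no monochromatic-arc pigeonhole, no large $TT_N$, and no need to ``drop'' to dichromatic number $k-1$ inside a copy.
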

\begin{proof}
We prove the lemma by showing the following statement using induction on $i$ (the lemma then follows by setting $F:=F^{(k)}$). 

\medskip

$(\star)$ For every $i \in \{1,\ldots,k\}$, there exists a digraph $F^{(i)} \in \mc C$ such that $\dic(F^{(i)})=k$, and for every $k$-dicolouring of $F^{(i)}$, there exists a copy of $TT_i$ contained in $F^{(i)}$ which is rainbow.

\medskip

The statement of $(\star)$ is trivially true for $i=1$, since we may put $F^{(1)}:=G$, and in every $k$-dicolouring of $F^{(1)}$ any single vertex forms a rainbow $TT_1$. 

For the inductive step, let $i \in \{1,\ldots,k-1\}$ and suppose we have established the existence of a digraph $F^{(i)} \in \mc C$ of dichromatic number $k$ such that every $k$-dicolouring of $F^{(i)}$ contains a rainbow copy of $TT_i$. 

We now construct a digraph $F^{(i+1)}$ from $F^{(i)}$ as follows: Let $\mathcal{X}$ denote the set of all $X \subseteq F^{(i)}$ such that $X$ induces a $TT_i$ in $F^{(i)}$. Now, for every $X \in \mathcal{X}$ create a distinct copy $G_X$ of the digraph $G$ (pairwise vertex-disjoint for different choices of $X$, and all vertex-disjoint from $F^{(i)}$). Finally, for every $X\in \mathcal{X}$, add all the arcs $xy$ with $x \in X$ and $y \in V(G_X)$. Since $F^{(i)} \in \mc C$ and $G_X \in \mc C$ for every $X \in \mathcal{X}$, we can repeatedly apply Lemma~\ref{lem:directedsum} to find that the resulting digraph, which we call $F^{(i+1)}$, is still contained in $\mc C$. 

Note that by construction, no directed cycle in $F^{(i+1)}$ intersects more than one of the vertex-disjoint subdigraphs $F^{(i)}$ and  $(G_X|X \in \mathcal{X})$ of $F^{(i+1)}$, and hence, these digraphs may be coloured independently in every dicolouring of $F^{(i+1)}$. This immediately implies $\dic(F^{(i+1)})=\max\{\dic(F^{(i)}),\dic(G)\}=k$. 

To prove the inductive claim, consider any $k$-dicolouring $c:V(F^{(i+1)}) \rightarrow \{1,\ldots,k\}$ of $F^{(i+1)}$. Then by inductive assumption, there exists a rainbow copy of $TT_i$ contained in the subdigraph of $F^{(i+1)}$ isomorphic to $F^{(i)}$. Let $X$ denote its vertex-set, and let $I \subseteq \{1,\ldots,k\}$ be the set of $i$ distinct colours used on $X$. Since $i<k$ and $\dic(G_X)=k$, there exists a vertex $v \in V(G_X)$ such that $c(v) \notin I$. Now, the vertex-set $X \cup \{v\}$ induces a rainbow $TT_{i+1}$ contained in $F^{(i+1)}$, as desired. This proves $(\star)$ and thus the lemma. 
\end{proof}

\begin{theorem}
The digraph $\vec C_3 \Ra K_1$ is not a hero in \ocgs. 
\end{theorem}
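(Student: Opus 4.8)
The plan is to build, for every integer $k$, a digraph $H_k \in \mc C$ with $\dic(H_k) \geq k$, which immediately shows $\vec C_3 \Ra K_1$ is not a hero in \ocgs. The key ingredients are already assembled in the three preceding lemmas: Lemma~\ref{lem:directedsum} (gluing a whole member of $\mc C$ onto a transitive subtournament), Lemma~\ref{lem:add_vertex_TT} (inserting a single new vertex into a prescribed position of a transitive subtournament), and above all Lemma~\ref{lem:rainbow}, which upgrades any $G \in \mc C$ with $\dic(G)=k$ to some $F=F(G) \in \mc C$ with $\dic(F)=k$ in which \emph{every} $k$-dicolouring is forced to contain a rainbow $TT_k$. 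I would run an induction on $k$: suppose $G_{k} \in \mc C$ has $\dic(G_k)=k$; apply Lemma~\ref{lem:rainbow} to obtain $F_k := F(G_k) \in \mc C$ with the rainbow property; then attach, to each transitive subtournament of $F_k$ that is a copy of $TT_k$, a suitable gadget (built from further copies of members of $\mc C$ via Lemmas~\ref{lem:directedsum} and~\ref{lem:add_vertex_TT}) whose only function is to make a $(k{+}1)$-st colour unavoidable whenever that $TT_k$ is rainbow. The rainbow guarantee of Lemma~\ref{lem:rainbow} ensures that in \emph{any} attempted $k$-dicolouring one of these gadgets is activated, yielding a contradiction and hence $\dic(G_{k+1}) \geq k+1$; one also checks (as in the proof of Theorem~\ref{thm:ce_122}) that $\dic(G_{k+1})$ stays finite, and staying inside $\mc C$ is automatic from the lemmas.

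More concretely, the gadget attached to a copy $X \cong TT_k$ with topological order $v_1,\dots,v_k$ should be designed so that: if the colours of $v_1,\dots,v_k$ are all distinct, then some forced vertex $w$ of the gadget must repeat one of these colours while simultaneously $X \cup \{w\}$ contains a directed triangle — reproducing the contradiction " $x\to y\to w\to x$ is monochromatic" from Theorem~\ref{thm:ce_122}, but now triggered by \emph{rainbowness} rather than by a monochromatic arc inside a $(k{+}1)$-clique. The natural construction is: add a new vertex $x$ with $x$ seen by $v_1,\dots,v_{k-1}$ and seeing $v_k$ (legal by Lemma~\ref{lem:add_vertex_TT}), so that $x\to v_k$ and $v_{k-1}\to x$, and then glue a copy $G_X$ of $G_k$ (or of $F_k$) onto a transitive subtournament through $x$ and $v_k$ via Lemma~\ref{lem:directedsum}, so that every vertex $w$ of $G_X$ forms a directed triangle with the arc $v_k \to x$ — wait, one must orient things so that $v_k\to x$ is an arc, or more simply arrange that each $w$ closes a directed triangle $x \to v_k \to w \to x$ using the arc $x\to v_k$ is absent; I would instead orient the inserted vertex so that $v_1$ sees it and it sees $v_k$, giving the arc pair needed for a triangle with any $w$ hung below, mirroring the $G_{k+1}$ construction. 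Since $\dic$ of such a glued copy is still $k$, if only $k-1$ colours are "forbidden" on it (the colours of the rainbow $TT_{k-1}$ sitting on $v$'s below $x$) there is a vertex $w$ repeating a colour of the triangle, contradiction.

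The step I expect to be the main obstacle is getting the gadget's combinatorics exactly right: I need the forced repeated colour in $G_X$ to coincide with a colour on a \emph{directed triangle through $X$}, not merely somewhere in $X$ — recall the failure mode exploited at the end of Theorem~\ref{thm:ce_122}, where "there is no arc from the outdegree-$1$ vertex to the indegree-$1$ vertex" blocked a naive argument. So the attachment must place $G_X$ below a pair of vertices of $X$ (or below $X$ plus one auxiliary vertex inserted by Lemma~\ref{lem:add_vertex_TT}) that already form an arc lying on a directed triangle with every vertex of $G_X$; equivalently I want each $w \in V(G_X)$ to satisfy $w \to a \to b \to w$ for a fixed arc $ab$ inside $X\cup\{\text{aux}\}$. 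Choosing that arc to be, say, from the auxiliary vertex $x$ to $v_k$ and hanging $G_X$ so that $v_k \to w$ and $w \to x$ for all $w$, one gets the triangle $x \to v_k \to w \to x$ for free, and then Lemma~\ref{lem:rainbow}'s rainbow $TT_k$ (on $v_1,\dots,v_{k-1}$ together with one more vertex, or on all of $v_1,\dots,v_k$) forces $k$ distinct colours among vertices adjacent to the gadget, leaving $G_X$ — of dichromatic number $k$ — unable to avoid repeating one of them. Verifying membership in $\mc C$ at each gluing is routine via Lemmas~\ref{lem:directedsum} and~\ref{lem:add_vertex_TT}, and verifying that $\dic$ does not blow up is the same independent-colouring observation as in Lemma~\ref{lem:rainbow} and Theorem~\ref{thm:ce_122}. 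Once the gadget is pinned down, the induction closes and the theorem follows.
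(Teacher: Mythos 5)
Your overall strategy (induct on $k$, use Lemma~\ref{lem:rainbow} to force a rainbow $TT_k$, then attach a gadget turning rainbowness into a monochromatic directed triangle) is the right one and is the paper's, but the gadget you describe cannot be built inside $\mc C$, and this is a genuine gap rather than a detail to be checked. You want every vertex $w$ of a glued copy $G_X$ of dichromatic number $k$ to close a directed triangle with one fixed arc of $X\cup\{x\}$, e.g.\ $x\to v_k\to w\to x$. That forces all of $V(G_X)$ into the in-neighbourhood of $x$; since $\dic(G_X)\ge 2$ implies $G_X$ contains a directed triangle (a shortest directed cycle is induced, hence of length $3$ in an \ocg), the vertex $x$ then has a directed triangle in its in-neighbourhood, i.e.\ an induced $\vec C_3\Ra K_1$ --- exactly the digraph the class excludes. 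The same obstruction kills every reorientation: if each $w$ closes a triangle with a fixed arc $a\to b$, then all of $G_X$ lands in $a^-$. It is also inconsistent with the lemmas you invoke: Lemma~\ref{lem:directedsum} only adds arcs \emph{from} the transitive tournament \emph{to} the glued copy, and Lemma~\ref{lem:add_vertex_TT} only permits the new vertex's in-neighbourhood to be a transitive tournament, so neither can produce the arcs $w\to x$ for all $w\in V(G_X)$.

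The missing idea is that the apex's in-neighbourhood must itself be a \emph{transitive} tournament, so the guarantee that it carries every colour must come from rainbowness, not from dichromatic number --- which requires a \emph{second} application of Lemma~\ref{lem:rainbow}. Concretely, the paper hangs below each transitive subtournament $T$ of $F_k:=F(G_k)$ a second copy $F_k^T$ of $F_k$ (arcs from $T$ to $F_k^T$, legal by Lemma~\ref{lem:directedsum}), and then for each transitive subtournament $T'$ of $F_k^T$ adds an apex $x_{T,T'}$ with $T'\to x_{T,T'}\to T$ (legal by Lemma~\ref{lem:add_vertex_TT}, since $T\cup T'$ with all arcs from $T$ to $T'$ is a transitive tournament with $T$ first). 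In a hypothetical $k$-dicolouring, Lemma~\ref{lem:rainbow} yields a rainbow $TT_k$ in $F_k$, namely $T$, and then again a rainbow $TT_k$ in $F_k^T$, namely $T'$; whatever colour $x_{T,T'}$ receives appears on some $t_1\in V(T)$ and some $t_2\in V(T')$, and $t_1\to t_2\to x_{T,T'}\to t_1$ is monochromatic. Your single-layer gadget cannot achieve this because the apex's colour is only guaranteed to be matched on one side of the would-be triangle.
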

\begin{proof}
We construct a sequence of digraphs $(G_k)_{k \in \mathbb N}$ such that $\dic(G_k)=k$ and $G_k \in \mc C$.  
Let $G_1$ be the one-vertex-digraph and, having defined $G_k$, define $G_{k+1}$ as follows. Let $F_k:=F(G_k) \in \mc C$ be the digraph given by Lemma~\ref{lem:rainbow}, such that $\dic(F_k)=k$ and such that every $k$-dicolouring of $F_k$ contains a rainbow copy of $TT_k$. 

Let $\mathcal{T}$ denote the set of transitive tournaments which are subdigraphs of $F_k$. Now, for each transitive subtournament $T \in \mathcal{T}$, add a copy $F^T_k$ of $F_k$ (vertex-disjoint for different choices of $T$, and all vertex-disjoint from $F_k$). Next, for every $T \in \mathcal{T}$, add all the arcs $xy$ with $x \in V(T)$ and $y \in V(F^T_k)$. Finally, for every choice of $T \in \mathcal{T}$ and every transitive subtournament $T'$ of $F^T_k$, add a vertex $x_{T,T'}$ that is seen by every vertex of $T'$ and that sees every vertex of $T$.  
This completes the description of the digraph $G_{k+1}$.

By repeatedly applying Lemma~\ref{lem:directedsum} and Lemma~\ref{lem:rainbow}, we can see that all of the operations performed to construct $G_{k+1}$ preserve containment in $\mc C$, and hence, since $F_k \in \mc C$, we also must have $G_{k+1} \in \mc C$. 

Let us now prove that $\dic(G_{k+1}) = k+1$. A $(k+1)$-dicolouring can be achieved by piecing together individual $k$-dicolourings of $F_k$ and its copies $F_k^T, T \in \mathcal{T}$, and assigning to all vertices of the form $x_{T,T'}$ (which form a stable set in $G_{k+1}$) a distinct $(k+1)$-th colour. 

Finally, to prove that $\dic(G_{k+1})>k$, assume towards a contradiction that $G_{k+1}$ admits a dicolouring using colours from $\{1,\ldots,k\}$. Then by Lemma~\ref{lem:rainbow}, in this colouring $F_k$ contains a rainbow transitive tournament $T$ of size~$k$. Again by Lemma~\ref{lem:rainbow}, also $F^T_k$ contains a rainbow transitive subtournament $T'$ of size $k$. Now consider the vertex $x_{T,T'}$ in $G$, and let $i \in \{1,\ldots,k\}$ denote its colour. Since both $T$ and $T'$ contain all $k$ colours, there exist vertices $t_1 \in V(T)$ and $t_2 \in V(T')$ which are both assigned colour $i$. Finally, this yields a contradiction, since now the directed triangle $t_1 \rightarrow t_2 \rightarrow x_{T,T'} \rightarrow t_1$ in $G_{k+1}$ is monochromatic. 
\end{proof}

\section{Further works}

After characterising heroes in \ocgs, it is natural to ask what are the heroes in orientations of subclasses or superclasses of chordal graphs. 

Concerning superclasses of chordal graphs, consider the following construction (already mentioned in~\cite{ACN21}). Let $G_1$ be the graph on $1$ vertex, and having defined $G_{k-1}$ inductively, define $G_k$ as follows: start with three disjoint copies $G^1_{k-1}, G^2_{k-1}, G^3_{k-1}$ of $G_{k-1}$ plus a vertex $x$, and add all arcs from $x$ to $V(G^1_{k-1})$, all arcs from $V(G^1_{k-1})$ to $V(G^2_{k-1})$, all arcs from $V(G^2_{k-1})$ to $V(G^3_{k-1})$ and finally, all arcs from $V(G^3_{k-1})$ to $x$. It is then easy to see that $\dic(G_k)=k$ and that the underlying graph of $G_k$ does not contain induced path of length $4$. Hence, the underlying graphs of the $G_k$'s are perfect graphs, and even co-graphs, which implies that $\vec C_3$ is not a hero in orientation of perfect graphs. So the only possible heroes are transitive tournaments, which are trivially, since transitive tournaments are heroes in any orientations of graphs in $\mc C$, whenever $\mc C$ is a $\chi$-bounded class of graphs.

Concerning subclasses of chordal graphs, orientations of interval graphs seems to be an intriguing case. On one hand, we were not able to decide whether or not $\Delta(1,2,2)$ or $\vec C_3 \Ra K_1$ are heroes in this class,  and our attempts have not led us to a strong opinion as to the answer. 
On the other hand, we can prove the following. A \emph{unit interval graph} is an interval graph that admits an interval representation in which every interval has unit length. 

\begin{theorem}
Heroes in orientations of unit interval graphs are the same as heroes in tournaments.
\end{theorem}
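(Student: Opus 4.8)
The plan is to prove that every hero in tournaments is also a hero in orientations of unit interval graphs; the converse is immediate since orientations of unit interval graphs contain all tournaments. By Theorem~\ref{thm:heroes}, it suffices to show that the class of heroes in orientations of unit interval graphs is closed under the three operations $K_1$, $H_1\Ra H_2$, and $\Delta(1,k,H')$ (and its reverse). So the proof reduces to three closure lemmas, each asserting: if $H'$ (resp.\ $H_1,H_2$) is a hero in orientations of unit interval graphs, then so is the composed digraph.

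**Key structural input.** A unit interval graph has a vertex ordering $v_1,\dots,v_n$ (the left-to-right order of interval left endpoints) such that the closed neighbourhood of each $v_i$ is a contiguous interval in this order, and moreover every clique is an interval of consecutive vertices; equivalently, unit interval graphs are exactly the $\{K_{1,3},\text{net},\text{sun},C_{\ge 4}\}$-free graphs, but the linear-order characterisation is what I would use. The crucial consequence is a \emph{separator} statement: for any unit interval graph $G$ and any threshold, cutting the linear order at a point yields a separation $(A,B)$ where the ``boundary'' $A\cap N[B]$ is a clique (a contiguous block), and both sides are again unit interval graphs. So I would first isolate the lemma: an orientation of a unit interval graph on $n$ vertices with $n$ large and bounded clique number can be split along the linear order into two orientations of unit interval graphs with a common clique separator, each having a constant fraction of the vertices.

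**The three closure steps.** For $K_1$ there is nothing to prove. For $\Delta(1,k,H')$ and its reverse, the argument from \cite{hero} for tournaments should transfer: one shows any $H'$-free subtournament has bounded dichromatic number (since $H'$ is a hero in the subclass, applied to tournaments), then uses the standard ``domination'' argument — in a $\Delta(1,k,H')$-free orientation $G$ of a unit interval graph, either $G$ has bounded dichromatic number directly, or one finds a large transitive set $S$ such that every other vertex is either out-dominated or in-dominated by most of $S$; iterating gives a bounded-depth decomposition. The point is that the tournament proof only used that induced subdigraphs on cliques are tournaments (true here) plus Ramsey-type arguments (Theorem~\ref{thm:stearns}), which are class-agnostic. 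For $H_1\Ra H_2$: given an $(H_1\Ra H_2)$-free orientation $G$ of a unit interval graph with bounded $\omega$, use the clique separator to write $G = G_L \cup G_R$ glued on a clique $Q$; recurse on $G_L$ and $G_R$, each of which is still $(H_1\Ra H_2)$-free in the subclass, and then argue that the colourings combine. Here one needs that the ``complete-to'' structure of $H_1\Ra H_2$ interacts with the linear order — a copy of $H_1\Ra H_2$ straddling the separator forces $H_1$ (or $H_2$) entirely on one side with a dominating vertex/set on the other, which by $H_i$-freeness of one side gives bounded dichromatic number there; a divide-and-conquer on the linear order then yields an overall bound with only a logarithmic (in $n$) blow-up — but since $n$ is unbounded, one must instead argue, as in \cite{hero}, that the recursion terminates in a number of levels bounded in terms of $\omega$ and the sizes of $H_1,H_2$, not in $n$.

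**Main obstacle.** The delicate point is exactly this last one: making the divide-and-conquer along the unit-interval linear order terminate in a bounded number of rounds. In \cite{hero} the recursion for $H_1\Ra H_2$ in tournaments terminates because every ``unavoidable'' induced sub-tournament of bounded size is found, and the structure of tournaments forces the recursion tree to have depth bounded by a function of $|H_1|+|H_2|$. Here I must supplement the tournament argument with the unit-interval separator to handle the non-complete case, while ensuring the extra cutting does not cause unbounded recursion depth. I expect the resolution to be: bound $\omega(G)$ first (transitive tournaments are heroes, so $TT_k$-freeness bounds $\omega$; and if $G$ contains arbitrarily large transitive tournaments it is not $H$-free for $H\neq TT_m$ beyond some size — no, that is false), so more carefully, one proves a bound of the form $\dic(G) \le f(\omega(G))$ when $H$ is not transitive, using that the unit-interval structure lets one ``project'' to a bounded-width path decomposition; then the recursion depth is controlled by $\omega$ alone. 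Verifying that this projection respects $H$-freeness and yields the correct recursion is the technical heart, and is where I would spend the bulk of the effort.
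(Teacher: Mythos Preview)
Your plan has a genuine gap, and it is precisely the one you flag yourself: the recursion for the $H_1 \Ra H_2$ closure along clique separators has no reason to terminate in boundedly many rounds. Cutting a unit interval graph at a point of the linear order does give a clique separator, but the two sides are again arbitrary unit interval graphs of essentially the same clique number, so recursing produces a tree whose depth is governed by $n$, not by $\omega$ or $|H_1|+|H_2|$. Your suggested fix (``bound $\omega(G)$ first'') does not work either, as you notice: an $H$-free orientation of a unit interval graph can have arbitrarily large clique number when $H$ is not a transitive tournament. More fundamentally, importing the closure lemmas from \cite{hero} is risky because those arguments repeatedly use that \emph{every} pair of vertices is adjacent, not merely that cliques induce tournaments.

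The paper avoids all of this with a single structural observation that makes the recursive strategy unnecessary. Fix a unit interval representation with all endpoints non-integral, and for each integer $k$ let $K_k$ be the set of vertices whose interval contains $k$. Then the $K_k$ partition $V(G)$, each $K_k$ is a clique (hence induces a tournament), and there are no arcs between $K_i$ and $K_j$ once $|i-j|\ge 2$. Consequently, if every subtournament of $G$ has dichromatic number at most $C$, one $C$-dicolours each $G[K_k]$ separately, using the palette $\{1,\dots,C\}$ for odd $k$ and $\{C+1,\dots,2C\}$ for even $k$; this is a valid $2C$-dicolouring of $G$. Thus any hero $H$ in tournaments (with bound $C$) is a hero in orientations of unit interval graphs (with bound $2C$), and the converse is immediate since complete graphs are unit interval graphs. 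No induction on the structure of $H$, no closure lemmas, no recursion depth to control.
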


\begin{proof}
Since complete graphs are unit interval graphs, the set of heroes in  orientations of proper interval graphs is a subset of the set of heroes in tournaments. 

We are going to prove the following, which easily implies that every hero in tournaments is a hero in orientation of unit interval graphs. \smallskip

$(\star)$ For every integer $C$, if $G$ is an orientation of a unit interval graph in which every subtournament has dichromatic number at most $C$, then $G$ is $2C$-dicolourable. 
\smallskip

Let $G$ be an orientation of a unit interval graph and $C$ an integer such that every subtournament of $G$ has dichromatic number at most $C$. 
Consider an interval representation of $G$ where each interval has length $1$ and assume without loss of generality that the endpoints of each interval are not integers. 
 For every integer $k$, let $K_k$ be the set of vertices of $G$ whose associated interval contains $k$. So each $K_k$ induces a subtournament of $G$, and by hypothesis, $G[K_k]$ is $C$-dicolourable. Moreover, since each interval has length $1$ and their extremities are not integers, the $K_k$'s  partition the vertices of $G$ and  there is no arc between $K_i$ and $K_j$ whenever $|i-j| \geq 2$.  Hence, piecing together dicolourings of $G[K_k]$ with colours from $\{1, \dots, C\}$ when $k$ is odd, and from $\{C+1, \dots, 2C\}$ when $k$ is even, results in a $2C$-dicolouring of $G$. 
\end{proof}

We say that a digraph is \emph{$t$-local} if the out-neighborhood of each of its vertices induces a digraph with dichromatic number at most $t$. A class of digraphs $\mc C$ has the \emph{local to global property} if, for every integer $t$, $t$-local digraphs in $\mc C$ have bounded dichromatic number. 
It is proved in~\cite{HLTW19} that tournaments have the local to global property, and this result was generalised to the class of digraphs with bounded independence number in~\cite{HLNT19}.  
Since $K_1 \Ra  \vec C_3$ is not a hero in \ocgs, we get that the class of \ocgs does not have the local to global property, and that even $1$-local \ocgs can have arbitrarily large dichromatic number. 
We wonder if other interesting classes of digraphs have it. . 
\bigskip

{\bf Acknowledgments:} 
This research was partially supported by ANR project DAGDigDec (JCJC)   ANR-21-CE48-0012 and by the group Casino/ENS Chair on Algorithmics and Machine Learning. 
Raphael Steiner was supported by an ETH Zurich Postdoctoral Fellowship.

\end{document}